\newtheorem{Lem}{Lemma}[section]
\newtheorem{The}{Theorem}[section]
\newtheorem{Pro}{Proposition}[section]
\newtheorem{Rem}{Remark}[section]
\DeclareMathOperator*{\esssup}{ess\,sup}
\newcommand{\R}{\mathbb R}                        
\newcommand{\fref}[1]{\eqref{#1}}                 
\renewcommand{\phi}{\varphi}                      
\newcommand{\eps}{\varepsilon}                    
\newcommand{\cd}{\!\cdot\!}                       
\newcommand{\da}{d_1}                             
\newcommand{\db}{d_2}                             
\newcommand{\dc}{d_3}                             
\newcommand{\lap}{\triangle_{x}}                  
\newcommand{\nx}{\nabla_{\!\!x}\,}                  
\newcommand{\iO}{\int_\Omega}                     
\newcommand{\ai}{u_{\infty}}                      
\newcommand{\aq}{\overline{u}}                    
\newcommand{\Ai}{U_{\infty}}                      
\newcommand{\Aq}{\overline{U}}                    
\newcommand{\Aqq}{\overline{U^2}}                 
\newcommand{\bi}{v_{\infty}}                      
\newcommand{\bq}{\overline{v}}                    
\newcommand{\Bi}{V_{\infty}}                      
\newcommand{\Bq}{\overline{V}}                    
\newcommand{\Bqq}{\overline{V^2}}                 
\newcommand{\ci}{w_{\infty}}                      
\newcommand{\cq}{\overline{w}}                    
\newcommand{\Ci}{W_{\infty}}                      
\newcommand{\Cq}{\overline{W}}                    
\newcommand{\Cqq}{\overline{W^2}}                 
\newcommand{\ma}{\mu_1}                           
\newcommand{\mcu}{\mu_{3,max}}                    
\newcommand{\mb}{\mu_2}                           
\newcommand{\mc}{\mu_3}                           
\newcommand{\dela}{\delta_1}                      
\newcommand{\delb}{\delta_2}                      
\newcommand{\delc}{\delta_3}                      
\def\pa{\partial}
\def\ov{\overline}
\title{Exponential decay towards equilibrium
and global classical solutions for nonlinear reaction-diffusion systems}
\author{Klemens Fellner\footnote{Institut f\"ur Mathematik und
    Wissenschaftliches Rechnen, Heinrichstra{\ss}e 36, 8010
    Graz. Email: \texttt{klemens.fellner@uni-graz.at}.} \and El-Haj Laamri\footnote{Institute Elie Cartan, Universit\'{e} Lorraine, P. B. 239, 54506 Vand{\oe}uvre l\'{e}s Nancy, France E-mail: \texttt{El-Haj.Laamri@univ-lorraine.fr}.} 
}
\begin{document}

\maketitle

\begin{abstract}
We consider a system of reaction-diffusion equations describing 
the reversible reaction of two species $\mathcal{U}, \mathcal{V}$ forming a third species $\mathcal{W}$ and vice versa 
according to mass action law kinetics with arbitrary stochiometric coefficients (equal or larger than one). 

Firstly, we prove existence of global classical solutions via improved duality estimates under the assumption 
that one of the diffusion coefficients of $\mathcal{U}$ or  $\mathcal{V}$
is sufficiently close to the diffusion coefficient of $\mathcal{W}$.

Secondly, we derive an entropy entropy-dissipation estimate, that is a functional inequality, 
which applied to global solutions of these reaction-diffusion system proves 
exponential convergence to equilibrium with explicit rates and constants.  
\end{abstract}
\vskip 1cm
\noindent{\bf Key words:} Reaction-diffusion systems, global existence, duality method, entropy method, convergence to equilibrium, exponential rates of convergence.
\medskip

\noindent{\bf AMS subject classification:} 35B40, 35K57, 35B45.

\noindent{\bf Acknowledgement:} K.F gratefully acknowledges the hospitality of the 
Institut \'Elie Cartan de Lorraine, Nancy and the partial support of NAWI Graz.
E.-H.L  gratefully acknowledges the hospitality of the Wolfgang Pauli Institute, Vienna and the Institute for Mathematics and Scientific Computing, Graz University. Both authors would like to kindly thank the reviewer, whose careful report was very helpful in improving the manuscript.
\vfil\eject

\section{Introduction}
The aim of this paper is to investigate
the following class of systems of three nonlinear reaction-diffusion equations. 
For stoichiometric coefficients $\alpha, \beta, \gamma \ge 1$, we consider the system  
\begin{align}
&\pa_t u - \da\lap u = -\alpha(\ell\,u^{\alpha}v^{\beta} -k\,w^{\gamma}), \label{ABCA} \\
&\pa_t v - \db\lap v = -\beta(\ell\,u^{\alpha}v^{\beta} - k\,w^{\gamma}), \label{ABCB} \\
&\pa_t w - \dc\lap w = \gamma(\ell\,u^{\alpha}v^{\beta} -k\,w^{\gamma}), \label{ABCC}
\end{align}
where the constants $\ell,k>0$ are positive reaction rates and $\da,\db,\dc>0$
denote positive diffusion coefficients. Moreover, we assume that the concentrations $u,v$ and $w$ satisfy homogeneous Neumann conditions 
\begin{equation}
n(x)\cd\nx u = 0, \qquad n(x)\cd\nx v =0, 
\qquad n(x)\cd\nx w=0 \qquad x\in\partial\Omega ,
\label{ABCBC}
\end{equation}
on a bounded domain $\Omega$ with sufficiently smooth boundary $\partial\Omega$ 
and the non-negative initial condition
\begin{equation}\label{ABCID}
u(0,x) = u_0(x)\ge0, \qquad v(0,x) = v_0(x)\ge0, \qquad w(0,x) = w_0(x)\ge0\,.
\end{equation}

We remark that the reaction dynamics of System \eqref{ABCA}--\eqref{ABCC} 
is positivity preserving (see e.g. \cite{P2}) and, thus, that non-negative initial data $\eqref{ABCID}$ ensures non-negative solutions $u$, $v$, and $w$, i.e.
$$
(P) 
\quad u(t,x)\ge  0, \quad v(t,x)\geq 0, \quad w(t,x)\geq 0, \qquad \forall t\ge 0, \ x\in\Omega. 
$$

Moreover, the System \eqref{ABCA}--\eqref{ABCC} with homogeneous Neumann boundary conditions \eqref{ABCBC} 
satisfies the following two mass conservation laws: By taking the sums  $\gamma\times\eqref{ABCA} +  \alpha\times \eqref{ABCC}$ and $\gamma\times \eqref{ABCB}+  \beta\times \eqref{ABCC}$ and by
integrating by parts with non-flux boundary conditions, we obtain 
the following conservation laws to hold for  solutions of  \eqref{ABCA} -- \eqref{ABCID}:
\begin{align}
\iO (\gamma u(t,x)+\alpha w(t,x))\,dx = 
\iO (\gamma u_0(x)+\alpha w_0(x))\,dx &=: M_1 > 0,  \nonumber \\
\iO (\gamma v(t,x)+\beta w(t,x))\,dx = 
\iO (\gamma v_0(x)+\beta w_0(x))\,dx &=: M_2 > 0  \label{ABCCo},
\end{align}
where we assume strictly positive masses $M_1, M_2>0$.
\medskip
%
%

We remark that the System \eqref{ABCA}--\eqref{ABCC} can be further simplified 
by rescaling times, space and concentrations. In particular for all stoichiometric coefficients
$\alpha+\beta\neq\gamma$, the rescaling
$$
t\rightarrow \Bigl(\frac{k}{\ell}\Bigr)^{\frac{1-\gamma}{\alpha+\beta-\gamma}}\frac{1}{k}\,t, \qquad 
x\rightarrow |\Omega|^{\frac{1}{N}}\,x, \qquad 
(u,v,w)\rightarrow \Bigl(\frac{k}{\ell}\Bigr)^{\frac{1}{\alpha+\beta-\gamma}}(u,v,w),
$$ 
allows (without loss of generality) to consider the System 
\eqref{ABCA} -- \eqref{ABCC} with normalised reaction rates $\ell=k=1$ on a normalised domain, 
i.e. we can replace in all three equations \eqref{ABCA} -- \eqref{ABCC}
\begin{equation*}
(\ell\,u^{\alpha}v^{\beta} - k\, \,w^{\gamma}) \rightarrow (u^{\alpha}v^{\beta} - \,w^{\gamma}),\qquad |\Omega|=1\,. 
\end{equation*}

In the special cases when $\alpha+\beta=\gamma$, it is still possible, for instance, to 
rescaling $(\ell\,u^{\alpha}v^{\beta} - k\, \,w^{\gamma}) \rightarrow (u^{\alpha}v^{\beta} - \,w^{\gamma})$
in the first two equations \eqref{ABCA} and \eqref{ABCB}, but the third equation \eqref{ABCC}
can only be rescaled up to a common factor $\bigl(\frac{\ell}{k}\bigr)^{{1/\gamma}}$, i.e.  
$$
\gamma (\ell\,u^{\alpha}v^{\beta} - k\, \,w^{\gamma}) \rightarrow 
\gamma \,\Bigl(\frac{\ell}{k}\Bigr)^{\frac{1}{\gamma}}(u^{\alpha}v^{\beta} - \,w^{\gamma}).
$$ 
 
For the rest of the article, we shall always consider the complete rescaled system 
\begin{align}
&\pa_t u - \da\lap u = -\alpha(u^{\alpha}v^{\beta} -w^{\gamma}), \label{ABCa} \\
&\pa_t v - \db\lap v = -\beta(u^{\alpha}v^{\beta} - w^{\gamma}), \label{ABCb} \\
&\pa_t w - \dc\lap w = \gamma(u^{\alpha}v^{\beta} -w^{\gamma}),\label{ABCc}
\end{align}
{subject to the initial date \eqref{ABCID} and the homogeneous Neumann boundary conditions \eqref{ABCBC}.}
In the spacial cases $\alpha+\beta=\gamma$, the additional factor $\bigl(\frac{\ell}{k}\bigr)^{{1/\gamma}}$ 
in \eqref{ABCc} leads to non-essential modifications of the arguments and calculations in this paper. 
\medskip

The System \eqref{ABCa} -- \eqref{ABCc} with the conservation laws \eqref{ABCCo} 
possesses a unique detailed balance equilibrium state, which are the unique positive constants ($\ai,\bi,\ci$) balancing the three reversible reactions, i.e. 
$$
\ai^{\alpha}\,\bi^{\beta}=\ci^{\gamma}
$$
and satisfying the conservation laws \eqref{ABCCo}, i.e. 
$$\gamma\ai + \alpha\ci = M_1,\qquad \gamma\bi + \beta\ci  = M_2
$$ 
(recall $|\Omega|=1$). In fact, the uniqueness of the equilibrium ($\ai,\bi,\ci$) follows from the uniqueness of the solution $\ci$ of the transcendental equation
\begin{equation}
\left(\frac{M_1}{\gamma}-\frac{\ci\alpha}{\gamma}\right)^{\alpha}\,\left(\frac{M_2}{\gamma}-\frac{\ci\beta}{\gamma}\right)^{\beta}=\ci^{\gamma},
\label{39bis}
\end{equation}
where the left-hand-side of \eqref{39bis} is strictly monotone decreasing in $\ci$ while the right-hand-side of \eqref{39bis} is strictly monotone increasing in $\ci$.
\medskip 

Finally, the System \eqref{ABCa} -- \eqref{ABCc} 
features a Boltzmann-type entropy functional (or physically, a free energy functional)
\begin{equation}
E(u,v,w)(t) = \iO \left(u(\ln(u)-1)+v(\ln(v)-1)+
w(\ln(w)-1) \right)dx, \label{ABCEntr}
\end{equation}
which dissipates according to the non-negative 
entropy dissipation functional $D=-\frac{d}{dt}E$:
\begin{multline}
D(u,v,w)(t) = \da\iO\frac{|\nx u|^2}{u}\,dx 
+\db\iO\frac{|\nx v|^2}{v}\,dx +\dc\iO\frac{|\nx w|^2}{w}\,dx\\
+\iO(u^{\alpha} v^{\beta}-w^{\gamma})\ln\Bigl(\frac{u^{\alpha} v^{\beta}}{w^{\gamma}}\Bigr)\,dx \ge 0. 
\label{ABCEntrDiss}
\end{multline}
We remark, that thanks to the properties of the equilibrium ($\ai,\bi,\ci$), one can rewrite the 
relative entropy with respect to the equilibrium state in the following way:
\begin{multline}
E(u,v,w)(t) - E(\ai,\bi,\ci) = \iO \left( u\ln\Bigl(\frac{u}{\ai}\Bigr)-(u-\ai)\right.\\
\left.+ v\ln\Bigl(\frac{v}{\bi}\Bigr)-(v-\bi)+
w \ln\Bigl(\frac{w}{\ci}\Bigr)-(w-\ci) \right)dx, \label{ABCRelEntr}
\end{multline}
Clearly, the relative entropy dissipates according to the same entropy dissipation functional, i.e. 
\begin{equation*}
\frac{d}{dt}E(u,v,w)(t) = \frac{d}{dt}(E(u,v,w)-E(\ai,\bi,\ci))(t) = -D(u,v,w)(t).
\end{equation*}

\begin{Rem}
We emphasise that the entropy dissipation functional $D(u,v,w)$ vanishes for all constant states ($u,v,w$)=($\aq,\bq,\cq$), which balance the reactions, i.e. $\aq^{\alpha} \bq^{\beta}=\cq^{\gamma}$. These states, however, form a two parameter family and one crucially requires the two conservation laws \eqref{ABCCo} as additional constraints in order to identify 
the equilibrium ($\ai,\bi,\ci$)
as the unique state, which minimises/annihilates the entropy dissipation functional along the flow of mass-conserving solutions.
\end{Rem}

The following Theorem is the first main result of this paper. 
{The proof is based on applying (point-wise in time) a functional inequality between relative entropy and entropy-dissipation, see Proposition \ref{ABCDDH} below :}

\begin{The} \label{tt2} 
Let $\Omega$ be a 
{
connected open set of $\R^N$ $(N\ge 1)$ with
sufficiently smooth boundary $\partial\Omega$  such that Poincar\'e's inequality and the Logarithmic Sobolev inequality hold.} Let $\da,\db, \dc>0$ be three 
strictly positive diffusion coefficients and let $\alpha,\beta,\gamma\ge1$. 

{
Consider non-negative initial data $(u_0,v_0,w_0)$ with finite mass and finite entropy $E(u_0,v_0,w_0)$ 
and assume that $(u,v,w)$ is 
a non-negative solution (be it either classical, weak or possibly even renormalised, see Remark \ref{regul}) 
of System \eqref{ABCa} -- \eqref{ABCc} on an time interval $0\le t<T$  for $T\le+\infty$, which {is assumed}  
\begin{itemize}
\item [i)] {to satisfy} the conservation laws \eqref{ABCCo} 
with two positive initial masses $M_1>0$, $M_2>0$ (for a.e. $0\le t<T$) and 
\item[ii)]
to dissipates the entropy \eqref{ABCEntr} (and thus equally the relative entropy \eqref{ABCRelEntr}) 
according to the entropy dissipation functional \eqref{ABCEntrDiss}, 
i.e. {satisfies the following entropy dissipation law
\begin{equation}\label{entroDiss}
E(u,v,w)(t_1) +\int_{t_0}^{t_1} D(u,v,w)(s)\le E(u,v,w)(t_0), \quad \text{for a.e.}\   0\le t_0 \le t_1<T,
\end{equation}
}which implies that the entropy $E(u,v,w)$ of the solution remains bounded and thus $(u(t), v(t), w(t)) \in ((L\log L)(\Omega))^3$ for a.e. $t\in[0,T)$. 
\end{itemize}}
\smallskip

Then, as long as such a solution $(u,v,w)$ exists {{(i.e.  for all  $t<T\le +\infty$)}} and  
satisfies i) and ii), it also satisfies the following  
exponential decay toward equilibrium:
\begin{multline}
\| u(t) - \ai\|_{L^1(\Omega)}  +  
\| v(t) - \bi\|_{L^1(\Omega)} + 
\| w(t) - \ci\|_{L^1(\Omega)} \\
 \le C(E(u_0,v_0,w_0) - 
E(\ai,\bi,\ci))\,e^{-K\,t}, \qquad \text{a.e.}\  0\le t<T,\label{res2}
\end{multline}
for a constant $C$ and a rate $K$, which can both be estimated explicitly in terms of the parameters
$\alpha, \beta, \gamma>1$, $\da,\db, \dc>0$, $M_1,M_2>0$ and the Poincar\'e- and the Logarithmic Sobolev constants 
$P(\Omega)$ and $L(\Omega)$ of the domain $\Omega$.
\end{The}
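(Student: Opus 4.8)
The plan is to run the entropy method in its standard three-step form, with the analytically hard part — the entropy entropy-dissipation (EED) inequality — supplied by Proposition \ref{ABCDDH}. Abbreviate the relative entropy by $\mathcal{E}(t) := E(u,v,w)(t) - E(\ai,\bi,\ci)$, which by \eqref{ABCRelEntr} is non-negative and, by hypothesis ii), finite for a.e. $t\in[0,T)$. The first step is simply to invoke Proposition \ref{ABCDDH}: since $(u(t),v(t),w(t))$ obeys the conservation laws \eqref{ABCCo} with the fixed masses $M_1,M_2$ for a.e. $t$, the Proposition provides a constant $\lambda>0$, explicit in $\alpha,\beta,\gamma$, $\da,\db,\dc$, $M_1,M_2$, $P(\Omega)$ and $L(\Omega)$, such that the pointwise-in-time bound $D(u,v,w)(t)\ge\lambda\,\mathcal{E}(t)$ holds for a.e. $t$.

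The second step turns this into exponential decay of $\mathcal{E}$. Since the solution is only assumed weak or renormalised, I would not differentiate $E$ directly. First, applying \eqref{entroDiss} with $t_1\ge t_0$ shows that $t\mapsto\mathcal{E}(t)$ is non-increasing. Inserting the EED bound into \eqref{entroDiss} then gives the integrated inequality
\[
\mathcal{E}(t_1) + \lambda\int_{t_0}^{t_1}\mathcal{E}(s)\,ds \le \mathcal{E}(t_0), \qquad \text{a.e. } 0\le t_0\le t_1<T .
\]
Dividing by $t_1-t_0$ and letting $t_1\downarrow t_0$ at Lebesgue points extracts the pointwise differential inequality $\mathcal{E}'(t)\le-\lambda\,\mathcal{E}(t)$ for a.e. $t$. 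Because $\mathcal{E}$ is monotone, its distributional derivative is a non-positive measure whose singular part can only further decrease $\mathcal{E}$, so $t\mapsto e^{\lambda t}\mathcal{E}(t)$ is non-increasing and
\[
\mathcal{E}(t) \le \mathcal{E}(0)\,e^{-\lambda t} = \bigl(E(u_0,v_0,w_0)-E(\ai,\bi,\ci)\bigr)e^{-\lambda t}, \qquad \text{a.e. } 0\le t<T .
\]

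The third step passes from relative entropy to the $L^1$ distances via a Csiszár--Kullback--Pinsker (CKP) inequality applied to each species against its equilibrium constant. Here one must use the \emph{generalised} form of CKP valid when $\iO u\,dx\neq\ai$, since only the linear combinations in \eqref{ABCCo} are conserved so the individual masses drift; the relevant constant is controlled by the uniform mass bounds $\iO u\le M_1/\gamma$, $\iO v\le M_2/\gamma$, $\iO w\le\min(M_1/\alpha,M_2/\beta)$ coming from \eqref{ABCCo} and non-negativity. This yields a constant $C_{\mathrm{CKP}}>0$ with
\[
\mathcal{E}(t)\ge C_{\mathrm{CKP}}\Bigl(\|u(t)-\ai\|_{L^1(\Omega)}^2 + \|v(t)-\bi\|_{L^1(\Omega)}^2 + \|w(t)-\ci\|_{L^1(\Omega)}^2\Bigr).
\]
Combining the last two displays with $\sqrt{a^2+b^2+c^2}\ge\tfrac1{\sqrt3}(a+b+c)$ produces the exponential decay \eqref{res2} with rate $K=\lambda/2$, the prefactor collecting $C_{\mathrm{CKP}}$ and (a square root of) the initial relative entropy.

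I expect the two genuinely delicate points of this particular argument to be (a) justifying the Gronwall step at the low regularity permitted by the theorem, where one can lean only on the integrated law \eqref{entroDiss} and the monotonicity of $\mathcal{E}$ rather than on classical time-differentiation of $E$, and (b) the bookkeeping in the CKP step forced by the non-conservation of the individual masses of $u,v,w$, which necessitates the generalised CKP with mismatched reference mass. The true analytic heart of the result — the positivity and explicit value of $\lambda$ — is already encapsulated in Proposition \ref{ABCDDH}, so conditional on it the present theorem follows from the three short steps above.
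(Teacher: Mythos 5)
Your proposal is correct and follows essentially the same route as the paper: Proposition \ref{ABCDDH} applied pointwise in time, an integral Gronwall argument built on \eqref{entroDiss}, and a Csisz\'ar--Kullback type inequality (the paper's Proposition \ref{ABCDCK}, proved by splitting into fluctuation-around-average plus average-versus-equilibrium, which is exactly the ``generalised CKP'' you invoke to handle the non-conserved individual masses). The only difference is in the implementation of the Gronwall step: the paper works with the integrated quantity $\Psi(r)=\int\psi$ and never differentiates the relative entropy itself, whereas you extract an a.e.\ differential inequality and then argue via the sign of the singular part of the monotone function's derivative --- both are valid at the stated regularity.
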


\begin{Rem}\label{regul}
{
The weak entropy dissipation law \eqref{entroDiss} certainly holds as an equality for all $0\le t_0 \le t_1 <T$ for any smooth solutions of suitably truncated approximations of Systems \eqref{ABCa} -- \eqref{ABCc}.
Thus, the weak entropy dissipation law \eqref{entroDiss} also holds as an equality for all classical and weak solutions, which satisfy $(u^{\alpha}v^{\beta} -w^{\gamma})\in L^p$ for a $p>1$, since the 
terms in the entropy dissipation are then uniformly integrable, which allows to pass to the limit. The existence of such weak solutions in $L^p$ with $p>1$ is a consequence of Proposition \ref{propun}, for instance, 
for all $N$ and $\alpha+\beta, \gamma \le 2$ (see \eqref{sdfg} and it's discussion in the Proof of Theorem \ref{ttexistence}) or for all $N$ and general $\alpha,\beta, \gamma$ provided sufficiently close diffusion coefficients $d_1, d_2, d_3$, which satisfy \eqref{eq:const-small-q}
for some $p'<2$.
For less regular solutions of \eqref{ABCa} -- \eqref{ABCc}, like weak $L^1$-solutions and renormalised solutions (see e.g. \cite{DFPV,Fis}),
assuming they allow to pass to the (a.e. pointwise) limit 
to obtain the weak entropy dissipation inequality \eqref{entroDiss} (e.g. by lower-semicontinuity, ...), Theorem \ref{tt2} still yields exponential convergence to equilibrium in $L^1$ \eqref{res2} (even if passing to the limit 
in a suitable truncated version of the entropy dissipation 
\eqref{ABCEntrDiss} might not be clear).  
}
\end{Rem}

\begin{Rem}
Note that {in cases, where e.g. classical solutions to 
system \eqref{ABCa} -- \eqref{ABCc} can be established,}      exponential convergence towards equilibrium in higher Lebesgue or Sobolev norms can be proven by an 
interpolation argument between the exponential decay of Theorem \ref{tt2} and polynomially growing
$H^k$ bounds (compare \cite{DF06}). 
Depending on the nonlinearities and the space dimension, 
Sobolev norms of any order may be created even if they do not initially exist
thanks to the smoothing properties of the heat kernel. 
\end{Rem}

The proof of Theorem \ref{tt2} is based on the so called entropy method, which aims to 
quantify the entropy dissipation \eqref{ABCEntrDiss} in terms of the relative entropy  \eqref{ABCRelEntr}.
More precisely, we shall prove a functional inequality, a so called entropy entropy-dissipation estimate (see Proposition \ref{ABCDDH} below) 
of the form 
\begin{equation*}
D(u,v,w) \ge K (E(u,v,w)-E(\ai,\bi,\ci)),
\end{equation*}
where $K$ is an explicitly computable positive constant and $(u,v,w)$ are non-negative functions, which satisfy the conservation laws \eqref{ABCCo}.
{
The convergence to equilibrium as stated in 
the proof of Theorem \ref{tt2} follows then from applying this entropy entropy-dissipation estimate point-wise in time 
as long as solutions dissipate entropy according to \eqref{entroDiss} 
(and obey the conservation laws \eqref{ABCCo}).
}

Previous results obtained by applying the entropy method to prove explicit exponential convergence to equilibrium 
for reaction diffusion systems were obtained in e.g \cite{DF06,DF08,DFIFIP} for systems featuring quadratic nonlinear reactions. 
In this paper, we present new ideas in proving an entropy entropy-dissipation estimate 
for 
{arbitrary (super)-linear, monomial reactions rates, which includes general nonlinear mass action law models for 
any reaction $\alpha\, \mathcal{U} + \beta\, \mathcal{V} \leftrightarrow \gamma\, \mathcal{W}$, see e.g. \cite{VVV}. }
\medskip

As second main result of the paper, we shall prove the following global existence Theorem, which partially 
extends the previously known existence results of the System 
\eqref{ABCa} -- \eqref{ABCc}, \eqref{ABCBC} and \eqref{ABCID} by applying duality estimates, which were recently 
developed in \cite{CDF}:

\begin{The} \label{ttexistence}
Let $\Omega$ be a sufficiently smooth  bounded ($\partial\Omega\in C^{2+\alpha}, \alpha>0$) and connected open set of 
$\R^N$ ($N\ge 1$). Let $\da,\db, \dc>0$ be three 
strictly positive diffusivity constants and $\alpha,\beta,\gamma\ge1$ and $(u_0,v_0,w_0)\in  (L^{\infty}(\Omega))^3$. Assume that $|d_1-d_3|$ (or equivalently $|d_2-d_3|$) is sufficiently small (to be specified below). 

Then, the System 
\eqref{ABCa} -- \eqref{ABCc}, \eqref{ABCBC} and \eqref{ABCID}
has a global classical solution.
\end{The}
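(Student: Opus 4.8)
The plan is to pair a standard local-in-time classical existence theory with a global a priori bound coming from the improved duality estimates of \cite{CDF}, so that the usual continuation criterion forces the maximal existence time to be infinite. First I would invoke parabolic theory: since the reaction terms are locally Lipschitz and the system is quasi-positive, bounded nonnegative data $(u_0,v_0,w_0)\in(L^{\infty}(\Omega))^3$ yield a unique nonnegative classical solution of \eqref{ABCa}--\eqref{ABCc}, \eqref{ABCBC}, \eqref{ABCID} on a maximal interval $[0,T_{\max})$, with the blow-up alternative that $T_{\max}<\infty$ implies $\|u(t)\|_{L^{\infty}}+\|v(t)\|_{L^{\infty}}+\|w(t)\|_{L^{\infty}}\to\infty$ as $t\uparrow T_{\max}$. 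It therefore suffices to produce, on every finite interval $[0,T]$, an a priori $L^{\infty}$ bound on the (already smooth) solution. The conservation laws \eqref{ABCCo} immediately give uniform $L^1$ control of $u,v,w$, and the whole difficulty is to upgrade this to $L^{\infty}$.

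The structural key is that the combination $S:=\tfrac{u}{\alpha}+\tfrac{w}{\gamma}\ge0$ cancels the reaction term: forming $\tfrac1\alpha\times\eqref{ABCa}+\tfrac1\gamma\times\eqref{ABCc}$ gives
\[
\pa_t S-\lap(a\,S)=0,\qquad a:=\frac{\da\,u/\alpha+\dc\,w/\gamma}{u/\alpha+w/\gamma}\in[\min(\da,\dc),\,\max(\da,\dc)],
\]
a scalar equation in the non-divergence form to which Pierre's $L^2$ duality method applies, with a coefficient $a$ that is bounded between the two diffusivities and with $S(0)\in L^{\infty}$. The basic estimate (Proposition \ref{propun}) gives $S\in L^2$; the improved duality estimates of \cite{CDF} give $S\in L^p(\Omega\times(0,T))$ for $p$ in an interval about $2$ whose upper endpoint tends to $+\infty$ as the oscillation of $a$ — here controlled by $|\da-\dc|$ — tends to $0$. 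Since $0\le u\le\alpha S$ and $0\le w\le\gamma S$ pointwise, this delivers $u,w\in L^p$ with $p$ as large as desired, provided $|\da-\dc|$ is small enough, which is precisely the quantitative smallness recorded in \eqref{eq:const-small-q}.

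It then remains to control $v$ and to bootstrap. From \eqref{ABCb} and the favourable sign of the sink, $\pa_t v-\db\lap v=-\beta u^{\alpha}v^{\beta}+\beta w^{\gamma}\le\beta\,w^{\gamma}$; since $w\in L^p$ gives $w^{\gamma}\in L^{p/\gamma}$, choosing $p$ with $p/\gamma>(N+2)/2$ and comparing $v$ with the solution of the heat equation with source $\beta w^{\gamma}$ and datum $v_0\in L^{\infty}$ (using the $L^q$--$L^{\infty}$ smoothing of the Neumann heat semigroup) bounds $v\in L^{\infty}(\Omega\times(0,T))$. With $v$ bounded and $u,w\in L^p$, the full reaction term $u^{\alpha}v^{\beta}-w^{\gamma}$ lies in $L^{p/\max(\alpha,\gamma)}$; feeding this back into the three equations and applying maximal regularity / semigroup smoothing bootstraps $u,v,w$ to $L^{\infty}$ on $[0,T]$, which closes the continuation argument and yields $T_{\max}=+\infty$. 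The symmetry $u\leftrightarrow v$, $\alpha\leftrightarrow\beta$ of the system shows that the hypothesis ``$|\da-\dc|$ small'' may be replaced by ``$|\db-\dc|$ small'', explaining the ``or equivalently''.

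The main obstacle is the compatibility of the two demands made on $p$. Because $\alpha,\beta,\gamma$ are arbitrary, the bootstrap needs $L^p$ control of $u,w$ with $p$ large relative to $N$ and to the nonlinearity exponents, and the only mechanism producing such large $p$ is the improved duality estimate — whose admissible range of exponents shrinks back to $\{2\}$ as $|\da-\dc|$ increases. Reconciling ``$p$ large enough for the bootstrap'' with ``$|\da-\dc|$ small enough for the duality'' is exactly the role of condition \eqref{eq:const-small-q}, and checking it, together with verifying that a single smallness assumption suffices because the sign structure lets $w^{\gamma}$ alone drive the regularity of $v$, is the technical heart of the proof.
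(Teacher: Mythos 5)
Your proposal is correct and rests on the same two pillars as the paper's proof: the observation that the nonnegative combination of $u$ and $w$ that cancels the reaction term (the paper uses $\gamma u+\alpha w$, you use $u/\alpha+w/\gamma$, which is the same up to normalisation) solves a self-contained equation $\pa_t S-\lap(aS)=0$ with $a\in[\min\{\da,\dc\},\max\{\da,\dc\}]$, and the improved duality estimate of Proposition \ref{propun} giving $u,w\in L^{p}(\Omega_T)$ for arbitrarily large $p$ once $|\da-\dc|$ is small enough. You differ in the two subsequent steps. For $v$, the paper does not use the sign of the sink at all: it exploits $\beta(\pa_t u-\da\lap u)=\alpha(\pa_t v-\db\lap v)$ together with the duality norm-equivalence of \cite[Lemma~3.4]{P2} to transfer the $L^{p}$ bound from $u$ to $v$ (and likewise between $u,v$ and $w$); your comparison-principle argument $\pa_t v-\db\lap v\le\beta w^{\gamma}$ combined with $L^{q}\to L^{\infty}$ heat-semigroup smoothing is a legitimate alternative that lands $v$ directly in $L^{\infty}$. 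For the bootstrap, the paper iterates: starting from $p_0>\min\{\gamma,\alpha+\beta\}$ it gains $\tfrac{2}{N+2}$ in $1/p$ per step and only needs $p_0>\tfrac{N+2}{2}\bigl(\min\{\gamma,\alpha+\beta\}-1\bigr)$ for the sequence $p_n$ to increase to $\infty$, whereas your one-shot argument needs the stronger requirement $p>\max\{\alpha,\gamma\}\tfrac{N+2}{2}$ from the outset. Both close the continuation criterion and prove the theorem as stated (the smallness of $|\da-\dc|$ is unquantified there), but the paper's iteration yields a visibly weaker smallness condition on $|\da-\dc|$ in regimes where, say, $\gamma$ is large and $\alpha+\beta$ is small, at the price of the extra norm-equivalence machinery; your route is shorter and makes the role of the sign structure explicit.
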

%
\begin{Rem}
The result of Theorem \ref{ttexistence} seems to be new, for instance, in cases where $\alpha+\beta>\gamma$ are 
large and the diffusion coefficients are sufficiently close.  

However, up to our knowledge, the problem of global existence still holds open cases  when the diffusion coefficients are too different from each other and $2< \gamma\leq\alpha+\beta$.
\end{Rem}

We have organised this paper in the following manner. In Section 2, we first recall previously proven 
existence results and prove then Theorem \ref{ttexistence}, i.e. the
global existence of classical solutions for the System \eqref{ABCa} -- \eqref{ABCc} subject to non-negative bounded initial data $(u_0,v_0,w_0)$  and homogeneous Neumann boundary conditions \eqref{ABCBC} under the assumption that $d_1$ (or $d_2$) is sufficiently close to $d_3$.   

In Section 3, we establish an entropy entropy-dissipation estimate (Proposition \ref{ABCDDH}). 
Finally in Section \ref{Conv}, we prove a Cziszar-Kullback type inequality for the relative entropy functional 
\eqref{ABCRelEntr} and show, via a Gronwall argument,  
the exponential decay in $L^1$ towards the equilibrium $(\ai,\bi,\ci)$
as state in Theorem \ref{tt2}. 

%
%

\section{Existence Theory}
In the following, we denote 
$\Omega_T=(0,T)\times \Omega$ and for $p\in [1,+\infty)$
\begin{align*}
&\|u(t)\|_{L^p(\Omega)}=\Bigl(\int_\Omega |u(t,x)|^p\,dx\Bigr)^{1/p}, 
&&\|u\|_{L^p(\Omega_T)}=\Bigl(\int_0^T\!\!\int_\Omega |u(t,x)|^p\,dt dx\Bigr)^{1/p},\\
&\|u(t)\|_{L^\infty(\Omega)}=\esssup\limits_{x\in \Omega} |u(t,x)|, 
& &\|u\|_{L^\infty(\Omega_T)}=\esssup\limits_{(t,x)\in \Omega_T} |u(t,x)|.
\end{align*}
\noindent For the sake of completeness  and for the reader's convenience, let us recall the main known results on global existence of solutions to System
\eqref{ABCa} -- \eqref{ABCc} subject to the homogeneous Neumann boundary conditions \eqref{ABCBC} and non-negative initial data \eqref{ABCID}. At first, we shall provide  precise definitions of the notions of solutions see e.g. \cite{L2,P2}:
\medskip

By a {\it classical solution} to System \eqref{ABCa} -- \eqref{ABCc}, \eqref{ABCBC} and \eqref{ABCID}  on $\Omega_T=(0,T)\times \Omega$, we mean a triple of functions $(u,v,w)$ such that (at least)
\begin{itemize}
\item[i)] $(u,v,w)\in \mathcal{C}([0,T);L^1(\Omega)^3)\cap L^\infty([0,\tau]\times\Omega)^3$, $\forall \tau \in (0,T)$,
\item[ii)] $\forall p\in [1,+\infty)$ we have $u,v,w \in L^p(\Omega_T)$ and 
$\forall k,\ell=1\dots N$  
$$
\pa_t u,\pa_t v,\pa_t w \in L^p(\Omega_T), \quad \pa_{x_k}u,\pa_{x_k}v,\pa_{x_k}w\in L^p(\Omega_T), $$
$$
\pa_{x_kx_\ell}u, \pa_{x_kx_\ell}v,\pa_{x_kx_\ell}w \in L^p(\Omega_T),
$$
\item[iii)] the triple $(u,v,w)$ satisfy the equations  \eqref{ABCa} -- \eqref{ABCc} and the boundary conditions \eqref{ABCBC} a.e. (almost everywhere) on $\Omega_T$ and on $(0,T)\times\partial\Omega$, respectively in the sense of traces.
\end{itemize}
\medskip

By a {\it weak solution}  to System  \eqref{ABCa} -- \eqref{ABCc}, \eqref{ABCBC} and \eqref{ABCID}  on $\Omega_T$, we denote solutions essentially in the sense of distributions or, equivalently here, solutions in the sense of the variation of constants formula with the corresponding semigroups. More precisely, 
{we assume that  $w^\gamma-u^\alpha v^\beta\in L^1(\Omega_T)$ and} 
\begin{eqnarray*}
u(t)&=&S_{d_1}(t)u_0 + \alpha\int_0^t S_{d_1}(t-s)( w^\gamma(s)-u^\alpha(s) v^\beta(s))\,ds\\
v(t)&=&S_{d_2}(t)v_0 + \beta\int_0^t S_{d_2}(t-s)( w^\gamma(s)-u^\alpha(s) v^\beta(s))\,ds\\
w(t)&=&S_{d_3}(t)u_0 + \gamma\int_0^t S_{d_3}(t-s)( -w^\gamma(s)+u^\alpha(s) v^\beta(s))\,ds
\end{eqnarray*}
where $S_{d_i}(.)$ is the semigroup generated in $L^1(\Omega)$  by $-d_i\Delta$  with homogeneous Neumann boundary condition, $1\leq i\leq 3$.
\medskip

Provided non-negative initial data $u_0,v_0, w_0\in L^\infty(\Omega)$, the 
local-in-time existence and uniqueness of non-negative and uniformly bounded solution to (\ref{ABCa}) -- (\ref{ABCc}) are known (see e.g. \cite{R}). 
More precisely, there exists $T>0$ and a unique classical  solution $(u,v,w)$ of  \eqref{ABCa} -- \eqref{ABCc}, \eqref{ABCBC} and \eqref{ABCID} on $\Omega_T$.
If $T_{\max}$ denotes the maximal time of existence, then the solution 
triple $(u,v,w)$ ceases to remain bounded in the sense that
\begin{equation*}
\big(T_{\max}<+\infty\big)\Longrightarrow\lim_{t\nearrow T_{\max}}\left(\Vert u(t)\Vert_{L^\infty(\Omega)}+\Vert v(t)\Vert_{L^\infty(\Omega)}+ \Vert w(t)\Vert_{L^\infty(\Omega)}\right)=+\infty.
\end{equation*}
Moreover, it is well known that  in order to prove global-in-time existence 
(i.e. $T_{\max}=+\infty$), it is sufficient to obtain an a-priori estimate of the form
\begin{equation}\label{conti}
\forall t\in[0,T_{\max}),\qquad\Vert u(t)\Vert_{L^\infty(\Omega)}+\Vert v(t)\Vert_{L^\infty(\Omega)}+\Vert w(t)\Vert_{L^\infty(\Omega)}\leq H(t),
\end{equation}
where  $H : [0,+\infty)\rightarrow [0,+\infty)  $ is a non-decreasing and continuous function.
\medskip

However, an estimate like \eqref{conti} is far of being obvious for our System
except when the diffusion coefficients $d_1$, $d_2$, $d_3$ are equal, i.e  $d_1=d_2=d_3=d$. 
In this very special case, we have indeed that 
$Z=\beta\gamma u+\alpha\gamma v+2\alpha\beta w$ satisfies
$$
(E)
\begin{cases}
Z_t-d\Delta Z  =  0, &\qquad  (0,T_{\max})\times\Omega,\\
n(x)\cd\nx Z   =  0, &\qquad  (0,T_{\max})\times\partial\Omega,\\
Z(0,x)  =  Z_0(x), &\qquad x\in\Omega,\\
\end{cases}
$$
where $Z_0(x)=\beta\gamma u_0(x)+\alpha\gamma v_0(x)+2\alpha\beta w_0(x)$.
We may then deduce by the maximum principle for parabolic operators
(which doesn't hold for general parabolic systems) the following $L^{\infty}$ a-priori estimate 
$$ \forall t<T_{\max},\qquad
\|Z(t)\|_{L^\infty(\Omega)}\leq 
\|Z_0\|_{L^\infty(\Omega)}.
$$
Together with the non-negativity of the solution triple $(u,v,w)$, this implies
\begin{multline*}
\| u(t)\|_{L^\infty(\Omega)}+ \|v(t)\|_{L^\infty(\Omega)}+ \|w(t)\|_{L^\infty(\Omega)}
\leq 
\|Z_0\|_{L^\infty(\Omega)}\\
\leq \|\beta\gamma u_0+\alpha\gamma v_0+2\alpha\beta w_0\|_{L^\infty(\Omega)}, \qquad \forall t\leq T_{\max},
\end{multline*}
and $u(t)$, $v(t)$ and $w(t)$ are uniformly bounded in $L^\infty(\Omega)$ for all $t>0$ and therefore $T_{\max}=+\infty$.
\medskip

In general, the diffusion coefficients in System \eqref{ABCa}--\eqref{ABCc} are different from each other and the lack of a maximum principle makes the question 
of global existence considerably more complicated. 

Previously, the following cases have been studied by several authors:
\begin{enumerate}
\item Case  $\alpha=\beta=\gamma=1$:
 In this case, global existence of classical solutions was first obtained by Rothe \cite{R} for dimensions $N\leq 5$. Later, global classical solutions have first been proved by Pierre \cite{P1} for all dimensions $N$ and {then by Morgan \cite{M}, 
 Martin and Pierre \cite{MP} and Feng \cite{F}}. 
Exponential decay towards equilibrium has been shown by Desvillettes-Fellner \cite{DF06} for bounded solutions.
Global existence of weak solutions  has been proved by Laamri \cite{L} for initial data $u_0$,  $v_0$ and $w_0$ only in $L^1(\Omega)$. 

\item Second case $\gamma = 1$ and arbitrary $\alpha,\beta\ge 1$:
In this case, global existence of classical solutions has been obtained by Feng \cite{F} in  all dimensions $N$ and  more general boundary conditions. { It is also included in the results of \cite{M}}.

\item Third case  $\alpha+\beta\leq 2$ or $\gamma\leq 2$:
In this  case, Pierre \cite{P2} has proved global existence of weak solutions for initial data $u_0$,  $v_0$ and $w_0$ in $L^2(\Omega)$.
\item Fourth cases $\alpha+\beta< \gamma$ or ($1<\gamma < \frac{N+6}{N+2}$ and for arbitrary $\alpha,\beta\ge1$).
Laamri proved in \cite{L2}  global existence of classical solutions to System \eqref{ABCa}--\eqref{ABCc} , \eqref{ABCBC} and \eqref{ABCID} in the following cases:
\begin{enumerate}
\item $\alpha+\beta< \gamma$,
\item ($d_1=d_3$ or $d_2=d_3$) and for arbitrary $\alpha,\beta,\gamma\ge1$,
\item $d_1=d_2$ and for any  $(\alpha,\beta,\gamma)$ such that $\alpha+\beta\neq\gamma$,
\item $1<\gamma < \frac{N+6}{N+2}$ and for arbitrary $\alpha,\beta\ge1$.
\end{enumerate}
\end{enumerate}

Theorem \ref{ttexistence} below proves the existence of global solutions under a different set of assumptions and thus covers cases, which were open so far.
The proof is based on various duality estimates, in particular
on a recent improvement of certain duality estimates (see \cite{CDF}), which allow to obtain $L^p$ a-priori estimates ($p>2$) in cases where one of the diffusion coefficients $d_1$ or $d_2$ is sufficiently close to $d_3$. 
More precisely, we shall recall the following Proposition from \cite{CDF}:
\begin{Pro}[See Proposition 1.1 in \cite{CDF}]\label{propun} \hfill\\
  Let $\Omega$ be a bounded domain of $\R^N$ with smooth
  (e.g. $C^{2+\alpha}$, $\alpha>0$) boundary $\partial\Omega$, $T>0$,
  and $p\in (2,+\infty)$. We consider a coefficient function $M :=
  M(t,x)$ satisfying
  \begin{equation*}
    0 < a \leq M(t,x) \leq b < +\infty
    \quad \text{ for } (t,x) \in \Omega_T,
  \end{equation*}
  for some $0 < a < b < +\infty$, and an initial datum $u_0 \in
  L^p(\Omega)$.  \medskip

  Then, any weak solution $u$ of the parabolic system
  \begin{equation*}
    \left\{
      \begin{aligned}
        &\partial_t u - \Delta_x (M u) = 0 &&\quad \text{ on } \quad \Omega_T,
        \\
        &u(0,x) = u_0(x) &&\quad \text{ for } \quad x \in \Omega,
        \\
        &n(x)\cdot \nabla_x u  = 0 &&\quad \text{ on } \quad [0,T]
        \times \partial \Omega ,
      \end{aligned}
    \right.
  \end{equation*}
satisfies the estimate (where $p'<2$ denotes the H\"older conjugate
  exponent of $p$)
  \begin{equation}
    \label{eq:heat-estimate-forward}
    \|u\|_{L^p(\Omega_T)}
    \leq
    (1 + b \,D_{a,b,p'})\, T^{1/p} \,
    \|u_0\|_{L^p(\Omega)},
  \end{equation}
  and where for any $a,b>0$, $q\in (1,2)$
  \begin{equation*}
    D_{a,b,q}
    :=
    \frac{C_{\frac{a+b}2,q}}{1 - C_{\frac{a+b}2,q} \frac{b-a}{2}},
  \end{equation*}
  provided that the {following condition holds}
  \begin{equation}
    \label{eq:const-small-q}
    C_{\frac{a+b}2,p'} \, \frac{b-a}{2} < 1.
  \end{equation}
  Here, the constant $C_{m,q}>0$ is defined for $m>0$, $q \in (1,2)$
  as the best (that is, smallest) constant in the parabolic regularity
  estimate
  \begin{equation*}
    \| \Delta_x v\|_{L^q(\Omega_T)} \le C_{m,q}\,  \|f\|_{L^q(\Omega_T)},
  \end{equation*} 
  where  $v:[0,T] \times \Omega \to \R$ is the solution
  of the backward heat equation with homogeneous Neumann boundary conditions:
  \begin{equation*}
    \left\{
      \begin{aligned}
        &\partial_t v + m \,\Delta_x v = f &&\quad \text{ on } \quad \Omega_T,
        \\
        &v(T,x) = 0 &&\quad \text{ for } \quad x \in \Omega,
        \\
        &n(x)\cdot \nabla_x v  = 0 &&\quad \text{ on } \quad  [0,T]
        \times \partial \Omega.
      \end{aligned}
    \right.
  \end{equation*}

 We recall that one has $C_{m,q} < \infty$ for $m>0$, 
 {
 $q \in (1,2]$
 }and in particular $C_{m,2} \le\frac{1}{m}$.  Note that the constant $C_{m,q}$ may depend (besides on $m$
  and $q$) also on the domain $\Omega$ and the space dimension $N$,
  but {does not depend on the time $T$}.
\end{Pro}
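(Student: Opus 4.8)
This is Proposition~1.1 of \cite{CDF}, so one may simply invoke it; for completeness I sketch the duality argument on which it rests. The plan is to bound $\|u\|_{L^p(\Omega_T)}$ by testing the equation against solutions of the backward (adjoint) problem with the \emph{frozen} diffusivity $m=\frac{a+b}{2}$, treating the genuine coefficient as the perturbation $M=m+(M-m)$, where $|M-m|\le\frac{b-a}{2}$. By duality $\|u\|_{L^p(\Omega_T)}=\sup\{\int_0^T\!\iO uf\,dx\,dt:\ \|f\|_{L^{p'}(\Omega_T)}\le1\}$ with $p'\in(1,2)$, so I fix such an $f$ and let $v$ solve $\partial_t v+m\lap v=f$, $v(T,\cdot)=0$, with homogeneous Neumann conditions; this is exactly the backward problem whose maximal-regularity constant defines $C_{m,p'}$.

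First I would derive the pairing identity. Differentiating $\iO uv$, substituting the two equations, and integrating by parts twice in space (the boundary terms vanishing at the level of the weak/mild formulation by the homogeneous Neumann conditions) gives $\frac{d}{dt}\iO uv=\iO u\,(f+(M-m)\lap v)\,dx$. Integrating over $(0,T)$ and using $v(T)=0$ and $u(0)=u_0$ then yields
\[
\int_0^T\!\iO u f\,dx\,dt = -\iO u_0\,v(0)\,dx - \int_0^T\!\iO u\,(M-m)\lap v\,dx\,dt .
\]

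Next come the two estimates. For the perturbation term, H\"older's inequality and $|M-m|\le\frac{b-a}{2}$ bound it by $\frac{b-a}{2}\,\|u\|_{L^p(\Omega_T)}\|\lap v\|_{L^{p'}(\Omega_T)}$, and the parabolic regularity estimate defining $C_{m,p'}$ controls $\|\lap v\|_{L^{p'}(\Omega_T)}\le C_{m,p'}\|f\|_{L^{p'}(\Omega_T)}\le C_{m,p'}$; here $p'\in(1,2)$ is precisely the range in which $C_{m,p'}<\infty$. For the initial-data term I would use $v(0)=-\int_0^T(f-m\lap v)\,ds$, take the $L^{p'}(\Omega)$ norm, and apply H\"older in time to produce the factor $T^{1/p}$, obtaining $\|v(0)\|_{L^{p'}(\Omega)}\le T^{1/p}\bigl(1+m\,C_{m,p'}\bigr)\|f\|_{L^{p'}(\Omega_T)}$ and hence $|\iO u_0\,v(0)\,dx|\le T^{1/p}\bigl(1+m\,C_{m,p'}\bigr)\|u_0\|_{L^p(\Omega)}$.

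Finally I would combine the two bounds and take the supremum over $f$, reaching
\[
\|u\|_{L^p(\Omega_T)}\le T^{1/p}\bigl(1+m\,C_{m,p'}\bigr)\|u_0\|_{L^p(\Omega)} + C_{m,p'}\tfrac{b-a}{2}\,\|u\|_{L^p(\Omega_T)} .
\]
Under the smallness hypothesis \eqref{eq:const-small-q} the last term can be absorbed into the left-hand side; dividing by $1-C_{m,p'}\frac{b-a}{2}>0$ and using the identity $1+m\,C_{m,p'}=(1-C_{m,p'}\frac{b-a}{2})+b\,C_{m,p'}$ reproduces the stated constant $1+b\,D_{a,b,p'}$. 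The genuine analytic content, and hence the main obstacle, is the finiteness and $T$-independence of the maximal-regularity constant $C_{m,p'}$ for $p'\in(1,2)$ together with the borderline bound $C_{m,2}\le 1/m$; once this is granted the rest is bookkeeping, the only delicate points being to justify the pairing identity for merely weak solutions (by regularisation/approximation) and to ensure \eqref{eq:const-small-q} holds so that the absorption step is legitimate.
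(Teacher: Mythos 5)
Your sketch is correct and is essentially the argument behind Proposition~1.1 of \cite{CDF}, which the paper itself only cites without reproving: the duality pairing with the backward problem at the averaged diffusivity $m=\frac{a+b}{2}$, the absorption under \eqref{eq:const-small-q}, and the algebraic identity recovering $1+b\,D_{a,b,p'}$ all check out. Nothing further is needed.
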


Proposition \ref{propun} is the basis for the following proof of the existence Theorem \ref{ttexistence}:
%
\begin{proof}[\bf{Proof of Theorem \ref{ttexistence}}]\hfill\\ 
 The proof performs a bootstrap based on a-priori estimates derived on various duality estimates \cite{P2,CDF}.
 As a preliminary step, we shall assume without loss of generality that $|d_1-d_3|\le |d_2-d_3|$.
Otherwise, we rename $u\to v$, $d_1\to d_2$ and $\alpha\to\beta$ and vice versa.\\
First, we add the first equation \eqref{ABCa} and the third equation
\eqref{ABCc} to obtain 
\begin{equation*}
\pa_t (\gamma u+\alpha w) - \lap (M(t,x)(\gamma u+\alpha w))= 0, \quad \text{where}\quad M(t,x):=\frac{\da \gamma u + \dc \alpha w}{\gamma u+\alpha w},  
\end{equation*}
and observe that
\begin{equation*}
0< a := \min\{d_1,d_3\} \le M(t,x) \le b:=\max\{d_1,d_3\}<+\infty, \quad \forall (t,x)\in \Omega_T.
\end{equation*}
Thus, by applying the improved duality estimates derived in \cite{CDF}
and the non-negativity of the solutions, we have that
$u, w \in L^{p_0}(\Omega_T)$, where $p_0$ can be arbitrarily large provided 
that $b-a=|d_1-d_3|$ is 
assumed sufficiently small depending on the space dimension $N$, see \eqref{eq:const-small-q} above.
In particular, since $C_{m,2} \le\frac{1}{m}$ we estimate for \eqref{eq:const-small-q}
that
\begin{equation}\label{sdfg}
C_{\frac{a+b}2,2} \, \frac{b-a}{2} \le \frac{b-a}{a+b}< 1
\end{equation}
for all $0<a\le b<\infty$. 
{
Moreover, an interpolation argument between $C_{m,2}$ and $C_{m,3/2}$ shows that $C_{m,q}$ for $q\in[3/2,2]$ is bounded by a continuous function in $q$ and, as a consequence, that there exists always an exponent $p_0>2$ for any values of $d_1$ and $d_3$, see  \cite{CDF}.}
Finally, in all space dimensions, by assuming that $|d_1-d_3|$ is 
sufficiently small (depending on the space dimension $N$), we can find an exponent
\begin{equation*}
p_0=p_0(|d_1-d_3|)>\min\{\gamma,\alpha+\beta\},
\end{equation*}
such that at least one of the reaction terms on the right-hand sides of 
\eqref{ABCa}--\eqref{ABCc} is well-defined in $L^1$.

Next, we observe that 
\begin{equation*}
\beta(\pa_t u - \da\lap u)= \alpha(\pa_t v - \db\lap v )
\end{equation*}
and by applying another duality argument \cite[Lemma 3.4]{P2}, we obtain
\begin{equation*}
C_1 \| v \|_{L^p(\Omega_T)} \le \|u\|_{L^p(\Omega_T)} + 1 
\le C_2 \left(\| v \|_{L^p(\Omega_T)}+1\right), \qquad \forall p\in(1,+\infty).
\end{equation*}
Thus, we have that 
\begin{equation}\label{p0}
u, v, w \in L^{p_0}(\Omega_T), \qquad p_0=p_0(|d_1-d_3|)>\min\{\gamma,\alpha+\beta\}.
\end{equation}
In the following, we aim at bootstraping \eqref{p0}. We begin by observing that
\begin{equation*}
w^{\gamma} \in L^{\frac{p_0}{\gamma}}(\Omega_T), \qquad\text{and}\qquad 
u^\alpha v^\beta \in L^{\frac{p_0}{\alpha+\beta}}(\Omega_T),
\end{equation*}
where the later follows from Young's inequality, i.e. 
$$
u^\alpha v^\beta\le \frac{\alpha u^{\alpha+\beta} + \beta v^{\alpha+\beta}}{\alpha+\beta}.
$$
Then, parabolic regularity applied to the equations 
\eqref{ABCa}, \eqref{ABCb} implies that 
(see e.g. \cite{DF15}) 
\begin{equation}\label{q1a}
u, v \in L^{q_1}(\Omega_T), \qquad \frac{1}{q_1} > \frac{\gamma}{p_0} - \frac{2}{N+2}.
\end{equation}
Similarly, equation \eqref{ABCc} yields
\begin{equation}\label{q1b}
w \in L^{\tilde{q}_1}(\Omega_T), \qquad \frac{1}{\tilde{q}_1} > \frac{\alpha+\beta}{p_0} - \frac{2}{N+2}.
\end{equation}
By recalling that 
\begin{equation*}
\gamma(\pa_t u - \da\lap u)= \alpha(-\pa_t w + \dc\lap w), \;\;
\gamma(\pa_t v - \db\lap v) = \beta(-\pa_t w + \dc\lap w )
\end{equation*}
and by applying \cite[Lemma 3.4]{P2}, we have due to the first equality  
\begin{equation*}
C_1 \| w \|_{L^p(\Omega_T)} \le \|u\|_{L^p(\Omega_T)} + 1 
\le C_2 \left(\| w \|_{L^p(\Omega_T)}+1\right), \qquad \forall p\in(1,+\infty),
\end{equation*} 
and analogous
\begin{equation*}
C_1 \| w \|_{L^p(\Omega_T)} \le \|v\|_{L^p(\Omega_T)} + 1 
\le C_2 \left(\| w \|_{L^p(\Omega_T)}+1\right), \qquad \forall p\in(1,+\infty).
\end{equation*} 
We are thus able to conclude from \eqref{q1a} and \eqref{q1b} and the above norm equivalencies that
\begin{equation}\label{p1}
u, v, w \in L^{p_1}(\Omega_T), \qquad 
\frac{1}{p_1} > \frac{\min\{\gamma,\alpha+\beta\}}{p_0} - \frac{2}{N+2}.
\end{equation}
and we calculate that $p_1>p_0$ if and only if 
\begin{equation*}
p_0 > \frac{N+2}{2} \left( \min\{\gamma,\alpha+\beta\} - 1\right).
\end{equation*}
In this case, iterating the arguments between \eqref{p0} and \eqref{p1} constructs a sequence $p_{n+1} > p_n$, which leads after finitely many steps to an index $p_n$
such that
$$
\frac{\min\{\gamma,\alpha+\beta\}}{p_n} - \frac{2}{N+2} \le 0
$$
and a final bootstrap step yields $p_{n+1} = \infty$. Thus, the solutions $u,v,w$ are bounded in $L^{\infty}(\Omega_T)$ for any $T>0$ and the existence of global classical 
solutions follows by standard arguments.
\end{proof}

\textbf{{
\begin{Rem}
We also remark an alternative approach (with a somewhat different condition) to start the bootstrap in Theorem \ref{ttexistence}, 
which is based on the duality estimates of e.g. \cite{P2} and was kindly pointed out 
to us by a reviewer: By adding the first equation \eqref{ABCa} and the third equation
\eqref{ABCc}, we obtain 
\begin{equation*}
\pa_t (\gamma u+\alpha w) - d_1\lap (\gamma u+\alpha w)= \alpha (\dc-\da) \lap w. 
\end{equation*} 
Then, by the duality technique (see e.g. the proof of \cite[Lemma 3.4]{P2}) it follows directly for any $p\in (1,+\infty)$ and some  
$C=C(p,T)$ that 
$$ 
\|\gamma u+\alpha w\|_{L^p(\Omega_T)}\le C\left[1+|\da-\dc|\|w\|_{L^p(\Omega_T)}\right].
$$
Thus, with $\alpha\|w\|_{L^p(\Omega_T)}\le \|\gamma u+\alpha w\|_{L^p(\Omega_T)}$, the required estimate for $\|w\|_{L^p(\Omega_T)}$ holds provided that $|\da-\dc|$ is small enough. 
\end{Rem}}}

\section{Entropy entropy-dissipation estimate} \label{Convergence}

In this section, we prove Proposition \ref{ABCDDH}, which details an entropy entropy-dissipation estimate for $E(u,v,w)$, $D(u,v,w)$ defined in
\eqref{ABCEntr} and \eqref{ABCEntrDiss}. The proof uses the following technical (but essentially elementary) 
Lemmata \ref{ABCDConst} and \ref{ABCDDHCore}:

At first, Lemma \ref{ABCDConst} establishes a kind of entropy entropy-dissipation estimate 
in the special case of spatially-homogeneous non-negative concentrations
satisfying the conservation laws \eqref{ABCCo}. One could also say that Lemma \ref{ABCDConst} proves, what would be  
the key step of proving an entropy entropy-dissipation estimate for the 
homogeneous ODE system associated to \eqref{ABCa}--\eqref{ABCc}.

Secondly, Lemma \ref{ABCDDHCore} generalises the estimate of Lemma \ref{ABCDConst} to spatially-inhomogeneous concentrations.
%
%
\medskip

\noindent\textbf{Notations: } 
For notational convenience, we introduce capital letters as a short notation 
for square roots of lower case concentrations and overline for spatial averaging (remember that $|\Omega|=1$), i.e.
$$
U=\sqrt{u}\,, \qquad \Ai=\sqrt{\ai}\,,\qquad 
\Aq=\iO U(t,x)\,dx,
$$
and analog for $V$ and $W$.
{Finally, we denote by 
$
\|F\|_2^2 = \iO |F(x)|^2 \, dx$ 
the square of the $L^2(\Omega)$ norm for a given function $F: \Omega\to \R$.\\}
\medskip

We begin with: 
\begin{Lem} \label{ABCDConst} 
Let $\Ai,\;\Bi,\;\Ci$ denote the positive square roots of the steady state 
$\ai,\bi,\ci$. Let 
$a,b,c$ be non-negative constants satisfying the conservation laws 
\fref{ABCCo}, i.e. 
$$
\gamma a^2+\alpha c^2 = M_1 = \gamma\Ai^2+\alpha\Ci^2\qquad\text{and}\qquad 
\gamma b^2+\beta c^2 = M_2 = \gamma\Bi^2+\beta\Ci^2.
$$
%
%
Then, 
\begin{equation}
(a-\Ai)^2+(b-\Bi)^2+(c-\Ci)^2\le C\,(a^{\alpha}\,b^{\beta}-c^{\gamma})^2\,, \label{ABCDConstDH}
\end{equation}
where $C=C(\alpha,\beta,\gamma,M_1,M_2)$ is an explicitly computable positive constant.
\end{Lem}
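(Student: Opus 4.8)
The plan is to use the two conservation laws \fref{ABCCo} to reduce the constrained three-variable inequality to a one-dimensional, non-degenerate monotonicity estimate. First I would solve the conservation laws for $a^2$ and $b^2$ in terms of $s:=c^2$,
\begin{equation*}
a^2 = \Ai^2 - \tfrac{\alpha}{\gamma}(s-\Ci^2), \qquad b^2 = \Bi^2 - \tfrac{\beta}{\gamma}(s-\Ci^2),
\end{equation*}
so that the admissible set is exactly the image of the compact interval $s\in I:=[0,\min\{M_1/\alpha,M_2/\beta\}]$ under the map $s\mapsto(a(s),b(s),c(s))$, with the equilibrium $(\Ai,\Bi,\Ci)$ corresponding to the \emph{interior} value $s=\Ci^2$ (interior since $\Ai,\Bi>0$ force $\Ci^2<M_1/\alpha$ and $\Ci^2<M_2/\beta$).

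Next I would introduce $f(s):=a(s)^{\alpha}b(s)^{\beta}-c(s)^{\gamma}=(a^2)^{\alpha/2}(b^2)^{\beta/2}-s^{\gamma/2}$, which vanishes at $s=\Ci^2$ by the balance relation $\Ai^{\alpha}\Bi^{\beta}=\Ci^{\gamma}$. The key quantitative step is the lower bound $|f'(s)|\ge m>0$ on $I$. Differentiating gives
\begin{equation*}
f'(s) = -\frac{1}{2\gamma}\bigl(\alpha^2 a^{\alpha-2}b^{\beta}+\beta^2 a^{\alpha}b^{\beta-2}\bigr)-\frac{\gamma}{2}\,s^{\gamma/2-1},
\end{equation*}
a sum of three non-positive terms; I would check these never vanish simultaneously on $I$: for $s>0$ the last term is strictly positive, while at $s=0$ one has $a,b>0$ so the first two are positive. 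Since $\alpha,\beta\ge 1$, any blow-up of $a^{\alpha-2}$ or $b^{\beta-2}$ at the endpoints only enlarges $|f'|$ and is harmless. Continuity of $|f'|$ on the compact $I$ then yields $m:=\inf_I|f'|>0$, explicit in $\alpha,\beta,\gamma,M_1,M_2$, and the mean value theorem gives $|f(s)|\ge m\,|s-\Ci^2|$.

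Finally I would control the left-hand side by $|s-\Ci^2|$ through the difference-of-squares identity: since $a^2-\Ai^2=-\tfrac{\alpha}{\gamma}(s-\Ci^2)$ and $a+\Ai\ge\Ai>0$,
\begin{equation*}
|a-\Ai|=\frac{|a^2-\Ai^2|}{a+\Ai}\le\frac{\alpha}{\gamma\Ai}\,|s-\Ci^2|,
\end{equation*}
and similarly $|b-\Bi|\le\tfrac{\beta}{\gamma\Bi}|s-\Ci^2|$ and $|c-\Ci|\le\tfrac{1}{\Ci}|s-\Ci^2|$. Squaring and summing yields $(a-\Ai)^2+(b-\Bi)^2+(c-\Ci)^2\le C_1(s-\Ci^2)^2$ with $C_1=\tfrac{\alpha^2}{\gamma^2\Ai^2}+\tfrac{\beta^2}{\gamma^2\Bi^2}+\tfrac{1}{\Ci^2}$, and combining this with $|f(s)|\ge m|s-\Ci^2|$ proves \fref{ABCDConstDH} with the explicit constant $C=C_1/m^2$.

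The main obstacle is the non-degeneracy $m>0$: ensuring $f'$ stays bounded away from zero uniformly over all of $I$ — in particular handling the endpoints where one of $a,b,c$ vanishes and the factors $a^{\alpha-2}$, $b^{\beta-2}$ or $s^{\gamma/2-1}$ may degenerate or blow up — and, beyond a soft compactness argument, extracting an explicit value of $m$ so that $C$ is genuinely computable as the statement asserts.
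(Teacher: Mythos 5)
Your argument is correct, and it takes a genuinely different route from the paper. The paper substitutes $a=\Ai(1+\mu_1)$, $b=\Bi(1+\mu_2)$, $c=\Ci(1+\mu_3)$, uses the conservation laws to express $\mu_1=-R_1(\mu_3)\mu_3$, $\mu_2=-R_2(\mu_3)\mu_3$ with explicitly bounded monotone factors $R_1,R_2$, bounds the numerator by $C\mu_3^2$, and then bounds the reaction term from below by $\bigl|(1+\mu_3)^\gamma-(1+\mu_1)^\alpha(1+\mu_2)^\beta\bigr|\ge(1+R_1)|\mu_3|$ using only the sign opposition of $\mu_3$ versus $\mu_1,\mu_2$ and the elementary inequalities $(1+\mu)^\alpha\ge 1+\mu$ for $\mu\ge0$, $\alpha\ge1$ (and the reverse for $\mu\in[-1,0]$) --- no derivatives and no compactness. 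You instead parametrize the constraint set by $s=c^2$ and obtain the lower bound on $|a^\alpha b^\beta-c^\gamma|$ from a uniform bound $|f'|\ge m$ plus the mean value theorem; both proofs reduce the problem to one scalar variable, and both deliver the statement. What the paper's route buys is that every constant is written down directly (the denominator bound is simply $\Ci^{2\gamma}\mu_3^2$); what your route buys is a shorter, more transparent monotonicity argument. The one step you should finish rather than leave as an ``obstacle'' is the explicitness of $m$: the soft lower semicontinuity/compactness argument does give $m>0$, but to match the claim that $C$ is explicitly computable you should, e.g., split $I=[0,s_{\max}]$ at $s_{\max}/2$, where $s_{\max}=\min\{M_1/\alpha,M_2/\beta\}$. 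On $[0,s_{\max}/2]$ the conservation laws give $M_1/(2\gamma)\le a^2\le M_1/\gamma$ and $M_2/(2\gamma)\le b^2\le M_2/\gamma$, so the term $\tfrac{\alpha^2}{2\gamma}a^{\alpha-2}b^{\beta}$ is bounded below explicitly (using the lower or upper bound on $a$ according to the sign of $\alpha-2$); on $[s_{\max}/2,s_{\max}]$ the term $\tfrac{\gamma}{2}s^{\gamma/2-1}$ is bounded below by $\tfrac{\gamma}{2}\min\{(s_{\max}/2)^{\gamma/2-1},\,s_{\max}^{\gamma/2-1}\}$. Taking the minimum of these two numbers gives an explicit $m$, and then $C=C_1/m^2$ is explicit as required.
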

\begin{proof}[\bf{Proof of Lemma \ref{ABCDConst}}]
The proof exploits a change of variables introducing perturbations $\ma,\mb,\mc$ around the equilibrium values 
$\Ai,\Bi,\Ci$, i.e.
\begin{equation}
a = \Ai(1+\ma),\qquad 
b = \Bi(1+\mb),\qquad 
c = \Ci(1+\mc). \label{ABCDConstan}
\end{equation}
Since the constants $a,b,c$ are non-negative, the new variable are bounded below, i.e. $\ma,\mb,\mc \ge-1$. 

Moreover, the conservation laws \fref{ABCCo} rewrite in the new variables into the relations 
\begin{align}
\gamma\Ai^2\, \ma (2+\ma) = -\alpha\Ci^2\,\mc(2+\mc), \label{ABCDConstABCDmu1} \\ 
\gamma\Bi^2\,\mb(2+\mb) = -\beta\Ci^2\,\mc(2+\mc). \label{ABCDConstABCDmu2} 
\end{align}
Since the mapping $\mu\mapsto \mu(2+\mu)$ is strictly monotone increasing on $[-1,\infty)$, 
it is straightforward to show that solving \eqref{ABCDConstABCDmu1} and \eqref{ABCDConstABCDmu2}
in terms of $\mc$ defines two strictly monotone decreasing functions $\mc \mapsto \ma(\mc)$ and 
$\mc \mapsto \mb(\mc)$, which have a unique zero crossing $\ma(0)=0=\mb(0)$ at $\mc=0$. More precisely,
\begin{eqnarray*}
&\ma(\mc) = \sqrt{1-\frac{\alpha\Ci^2}{\gamma\Ai^2}\mc(2+\mc)}-1,
\quad \ma \gtreqqless 0 \Leftrightarrow  \mc \lesseqqgtr 0,\qquad \mc\in[-1,\infty),\\
&\mb(\mc) = \sqrt{1-\frac{\beta\Ci^2}{\gamma\Bi^2}\mc(2+\mc)}-1,
\quad \mb \gtreqqless 0 \Leftrightarrow  \mc \lesseqqgtr 0,\qquad \mc\in[-1,\infty). \label{mbcdma}
\end{eqnarray*}

Moreover, the conservation laws \fref{ABCCo} or equivalently the relations \eqref{ABCDConstABCDmu1} and \eqref{ABCDConstABCDmu2} imply that the range of admissible $\mc$ is also bounded by above 
(since  $\ma(\mc)$, $\mb(\mc)$ are real and bounded below):    
\begin{equation*}
-1 \le \mc \le \mcu := -1+\sqrt{1+\min\left\{\frac{\gamma\Ai^2}{\alpha\Ci^2}\frac{\gamma\Bi^2}{\beta\Ci^2}\right\}}. 
\end{equation*}
Given these properties, we are able to express $\ma $ and $\mb$ in terms of $\mc$ in the following 
way:
\begin{eqnarray*}
&\ma = - R_1(\mc)\mc,\qquad   R_1(\mc):= \frac{\alpha\Ci^2}{\gamma\Ai^2}\frac{\mc+2}{\ma(\mc)+2},\\
&\mb = - R_2(\mc)\mc,\qquad   R_2(\mc):= \frac{\beta\Ci^2}{\gamma\Ai^2}\frac{\mc+2}{\mb(\mc)+2},
\end{eqnarray*}
and it is straightforward to check that the functions $R_1(\mc)$ and $R_2(\mc)$ are strictly monotone increasing and satisfy 
the following positive lower and upper bounds:
\begin{eqnarray*}
&0 < R_{1}(-1) < \frac{\alpha\Ci^2}{\gamma\Ai^2} = R_{1}(0) < R_{1}(\mcu) < \infty,\\
&0 < R_{2}(-1) < \frac{\beta\Ci^2}{\gamma\Ai^2} = R_{2}(0) < R_{2}(\mcu) < \infty.
\end{eqnarray*}
\smallskip

Next, we insert the ansatz \fref{ABCDConstan} into \fref{ABCDConstDH},
use the identity $\Ai^\alpha\Bi^\beta=\Ci^\gamma$ and observe that in order to prove Lemma \ref{ABCDConst}, we 
have to show that
\begin{equation}
\frac{\Ai^2\ma^2+\Bi^2\mb^2+\Ci^2\mc^2}
{\Ci^{2\gamma}\left((1+\mc)^\gamma-(1+\ma)^\alpha(1+\mb)^\beta\right)^2} \le C,
\label{ABCDConstDHa}
\end{equation}
for a constant $C$. We remark that for arbitrary perturbation $\ma, \mb, \mc \ge -1$ the fraction on the left-hand side of  \eqref{ABCDConstDHa} will not be bounded as there are many more zeros of the denominator than for the nominator, which vanishes only at $\ma=\mb=\mc=0$.

However, the conservations laws restrict the admissible perturbations $\ma=-R_1(\mc)\mc$ and 
$\mb=-R_2(\mc)\mc$ for $\mc\in[-1,\mcu]$.
Thus, we estimate the numerator of \fref{ABCDConstDHa}
\begin{equation} 
\Ai^2\ma^2+\Bi^2\mb^2+\Ci^2\mc^2
\le \mc^2\Ci^2\left(1+\frac{\Ai^2}{\Ci^2} R_1^2+\frac{\Bi^2}{\Ci^2}R_2^2
\right) 
\le C\,\mc^2,
\label{ABCDConstDHb}
\end{equation}
for a constant
 $C=C(\alpha,\beta,\gamma,M_1,M_2)$.

The denominator of \fref{ABCDConstDHa} can be estimated in the following way: We assume first that $\mc<0$, which is equivalent to $\ma(\mc)>0, \mb(\mc)>0$ by the 
monotonicity properties of $\ma(\mc)$ and $\mb(\mc)$.
Thus, we have $(1+\mc)^\gamma\le(1+\mc)$ and  $(1+\ma)^\alpha\ge(1+\ma)$ and $(1+\mb)^\beta\ge(1+\mb)$. Altogether in this case, we have 
\begin{align} \left|(1+\mc)^\gamma-(1+\ma)^\alpha(1+\mb)^\beta\right|\ge (1+\ma)(1+\mb)-(1+\mc)\nonumber \\
=\ma+\mb+\ma\mb-\mc\ge \ma-\mc = \left(R_1 +1\right)|\mc|\label{ABCDConstDHc}
\end{align}
since $\ma\ge-1$ implies $\mb+\ma\mb\ge0$. 

In the case $\mc\ge0$ and thus $\ma(\mc)\le0, \mb(\mc)\le0$, we have $(1+\mc)^\gamma\ge(1+\mc)$ and  $(1+\ma)^\alpha\le(1+\ma)$ and $(1+\mb)^\beta\le(1+\mb)$ and estimate in a similar way
\begin{align} \left|(1+\mc)^\gamma-(1+\ma)^\alpha(1+\mb)^\beta\right|\ge (1+\mc)-(1+\ma)(1+\mb)\nonumber \\
=\mc-\ma-\mb-\ma\mb\ge \mc-\ma = \left(1+R_1 \right)|\mc|\label{ABCDConstDHd}
\end{align}
since $\ma\ge-1$ implies $-\mb-\ma\mb\ge0$. 

Altogether, by \fref{ABCDConstDHc} and \fref{ABCDConstDHd}, 
we estimate the denominator of \fref{ABCDConstDHa} by
\begin{equation*}
\Ci^{2\gamma}\left((1+\mc)^\gamma-(1+\ma)^\alpha(1+\mb)^\beta\right)^2 \ge
\Ci^{2\gamma}\, \mc^2\,,
\end{equation*}
which proves with \fref{ABCDConstDHb} the statement of the Lemma \ref{ABCDConst}. 
\end{proof}
%

The following lemma extends Lemma \ref{ABCDConst} to non-negative 
functions $U,V,W$ whose squares $U^2,V^2,W^2$ satisfy the conservation laws \fref{ABCCo}:
%
%
\begin{Lem} \label{ABCDDHCore} 
Let $(\Ai,\,\Bi,\,\Ci)$ denotes the positive square roots of the steady state $(\ai,\bi,\ci)$
and $U,\,V,\,W : \Omega \rightarrow \R$ be measurable, non-negative functions 
satisfying the conservation laws 
\fref{ABCCo}, i.e. 
$$
\gamma\ov{U^2}+\alpha\ov{W^2} = M_1 = \gamma U_{\infty}^2+\alpha W_{\infty}^2,\qquad \gamma\ov{V^2}+\beta\ov{W^2} = M_2 = \gamma V_{\infty}^2+\beta W_{\infty}^2.
$$

Then, the following estimate holds for any $\alpha, \beta, \gamma \ge 1$
\begin{multline}
\|U-\Ai\|_2^2+\|V-\Bi\|_2^2+\|W-\Ci\|_2^2 \le K_1\|W^\gamma-U^\alpha V^\beta\|_2^2 \\
+ K_2\left(\|U - \Aq\|_2^2+\|V - \Bq\|_2^2+\|W - \Cq\|_2^2\right), 
\label{ABCDDHone}
\end{multline}
for various constants $K_1$ and $K_2$ depending only on $\alpha,\beta,\gamma\ge1$, $d_1,d_2,d_3>0$ and 
$M_1,M_2>0$.
\end{Lem}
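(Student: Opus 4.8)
The plan is to deduce Lemma~\ref{ABCDDHCore} from the pointwise estimate of Lemma~\ref{ABCDConst} applied to the root-mean-square values $a:=\sqrt{\Aqq}$, $b:=\sqrt{\Bqq}$, $c:=\sqrt{\Cqq}$, paying for the passage from functions to these averaged constants with the fluctuation terms $\|U-\Aq\|_2^2+\|V-\Bq\|_2^2+\|W-\Cq\|_2^2$ on the right of \eqref{ABCDDHone}. The key preliminary observation is that $(a,b,c)$ inherit \emph{exactly} the conservation laws required by Lemma~\ref{ABCDConst}, since $\gamma\Aqq+\alpha\Cqq=M_1$ and $\gamma\Bqq+\beta\Cqq=M_2$ by hypothesis; in particular these identities force $a,b,c$ to be bounded by explicit constants depending only on $\alpha,\beta,\gamma,M_1,M_2$ (e.g. $\Cqq\le\min\{M_1/\alpha,M_2/\beta\}$), a fact used repeatedly below.

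First I would use the orthogonal (Pythagorean) splitting $\|U-\Ai\|_2^2=\|U-\Aq\|_2^2+(\Aq-\Ai)^2$, valid because $U-\Aq$ has zero average, and likewise for $V,W$. The fluctuation parts $\|U-\Aq\|_2^2$ are already admissible on the right-hand side of \eqref{ABCDDHone}, so it remains to control the mean deviations $(\Aq-\Ai)^2+(\Bq-\Bi)^2+(\Cq-\Ci)^2$. Here I would replace each mean $\Aq$ by $a$ at the cost of a fluctuation: since $a^2=\Aqq=(\Aq)^2+\|U-\Aq\|_2^2$, one has $0\le a-\Aq\le\|U-\Aq\|_2$, hence $(\Aq-a)^2\le\|U-\Aq\|_2^2$. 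Thus $(\Aq-\Ai)^2\le 2(\Aq-a)^2+2(a-\Ai)^2$ reduces matters, via Lemma~\ref{ABCDConst} applied to $(a,b,c)$, to bounding the single quantity $(a^\alpha b^\beta-c^\gamma)^2$ by the reaction term and fluctuations.

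The crux, and the step I expect to be the main obstacle, is therefore to prove
\[
(a^\alpha b^\beta-c^\gamma)^2\le K_1'\,\|W^\gamma-U^\alpha V^\beta\|_2^2+K_2'\,\bigl(\|U-\Aq\|_2^2+\|V-\Bq\|_2^2+\|W-\Cq\|_2^2\bigr).
\]
The difficulty is genuinely nonlinear: for superlinear exponents the average $\overline{W^\gamma}$ of a high power is \emph{not} controlled by the $L^2$-fluctuation of $W$ (a tall thin spike inflates $\overline{W^\gamma}$ while leaving $\|W-\Cq\|_2$ small), so a direct comparison of $c^\gamma=(\Cqq)^{\gamma/2}$ with $\overline{W^\gamma}$ via a Jensen gap fails, and any attempt to decompose $a^\alpha b^\beta-c^\gamma$ into mean-level differences turns out to be circular. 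I would instead avoid Jensen gaps entirely and argue on a good set, exploiting that $a,b,c$ are bounded and depend only on the $L^2$ data.

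Concretely, set $\sigma^2:=\|U-\Aq\|_2^2+\|V-\Bq\|_2^2+\|W-\Cq\|_2^2$ and distinguish two regimes. If $\sigma$ exceeds a fixed threshold, then $(a^\alpha b^\beta-c^\gamma)^2$ is bounded by an explicit constant (boundedness of $a,b,c$) and hence by $K_2'\sigma^2$ trivially. If $\sigma$ is small, define the good set $G:=\{|U-\Aq|\le\delta,\ |V-\Bq|\le\delta,\ |W-\Cq|\le\delta\}$ with $\delta:=\sqrt2\,\sigma$; Chebyshev's inequality gives $|G|\ge 1-\sigma^2/\delta^2=\tfrac12$. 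Since $|\Aq-a|\le\sigma$ and $s\mapsto s^\alpha,\ s\mapsto s^\gamma$ are Lipschitz on the relevant bounded range, on $G$ one has $|(W^\gamma-U^\alpha V^\beta)-(c^\gamma-a^\alpha b^\beta)|\le C(\delta+\sigma)$ with $C$ explicit. Integrating the pointwise reaction over $G$ then yields $\|W^\gamma-U^\alpha V^\beta\|_2^2\ge |G|\,\bigl(|a^\alpha b^\beta-c^\gamma|-C(\delta+\sigma)\bigr)_+^2$, which after rearranging and squaring gives the displayed bound with explicit $K_1',K_2'$. Combining this with the two reduction steps proves \eqref{ABCDDHone}; I expect the only delicate bookkeeping to be tracking the Lipschitz constants and the threshold so that all constants stay explicit and independent of $U,V,W$.
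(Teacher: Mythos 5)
Your proof is correct, and while it shares the paper's overall strategy --- reduce to the constant-state estimate of Lemma \ref{ABCDConst}, handle the bad set by Chebyshev, and pay for all approximations with the fluctuation terms $\|U-\Aq\|_2^2+\|V-\Bq\|_2^2+\|W-\Cq\|_2^2$ --- your intermediate decomposition is genuinely different and in one respect cleaner. The paper pivots around the means $(\Aq,\Bq,\Cq)$: its Step 1 proves $\|W^\gamma-U^\alpha V^\beta\|_2^2\ge \tfrac12(\Cq^\gamma-\Aq^\alpha\Bq^\beta)^2-C\,\sigma^2$ by a good-set/Chebyshev argument very close to yours, and then in Step 2 must convert $\Aq$ into $\sqrt{\Aqq}=\Ai(1+\mu_1)$ via the expansion $\Aq=\Ai(1+\mu_1)-\ov{\dela^2}/(\sqrt{\Aqq}+\Aq)$, whose coefficient blows up when $\Aqq$ is small; this forces a separate, somewhat lengthy treatment of the degenerate cases $\Aqq\le\eps^2$, $\Bqq\le\eps^2$, $\Cqq\le\eps^2$. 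You instead pivot directly around the root-mean-squares $(a,b,c)=(\sqrt{\Aqq},\sqrt{\Bqq},\sqrt{\Cqq})$, which satisfy the conservation laws of Lemma \ref{ABCDConst} exactly, and you control the change of pivot by the division-free inequality $(a-\Aq)^2\le(a-\Aq)(a+\Aq)=\|U-\Aq\|_2^2$ together with the Pythagorean identity $\|U-\Ai\|_2^2=\|U-\Aq\|_2^2+(\Aq-\Ai)^2$. Since Lemma \ref{ABCDConst} is stated for all non-negative constants on the conservation manifold (including degenerate ones), your route needs no $\eps$-case analysis at all --- only the trivial dichotomy on whether $\sigma$ exceeds a fixed threshold. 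Your observation that a Jensen-gap comparison of $\ov{W^\gamma}$ with $c^\gamma$ fails for superlinear powers is also correct and is precisely why both you and the paper argue on a good set where the nonlinearity is Lipschitz. The only bookkeeping worth spelling out in a final write-up is the elementary inequality $(x-y)_+^2\ge\tfrac12 x^2-2y^2$ for $x,y\ge0$ used to absorb the Lipschitz error after squaring; with that, all constants are explicit in $\alpha,\beta,\gamma,M_1,M_2$ as required.
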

%
%
\begin{proof}[\bf{Proof of Lemma \ref{ABCDDHCore}}]\hfill \\
\noindent\textbf{Step 1 : }  In order to prove \eqref{ABCDDHone}, we shall first show that 
\begin{multline}
\|W^\gamma-U^\alpha V^\beta\|_2^2 \ge 
\frac{1}{2}\left(\Cq^\gamma-\Aq^\alpha \Bq^\beta\right)^2\\- 
C\left(\|U-\Aq\|_2^2+\|V-\Bq\|_2^2+\|W-\Cq\|_2^2\right),\label{ABCnonlin}
\end{multline}
for some constants $C$.
We remark that due to Jensen's inequality and the conservations laws \eqref{ABCCo}, we have the following natural bounds for the averages 
$$
\Aq\le\sqrt{\Aqq}\le C_M,\qquad 
\Bq\le\sqrt{\Bqq}\le C_M,\qquad 
\Cq\le\sqrt{\Cqq}\le C_M,
$$ 
{for a constants $C_M=C_M(\alpha,\beta,\gamma,M_1,M_2)$.}
Thus, also the reaction rate
$$
|\Aq^{\alpha}\Bq^{\beta}-\Cq^{\gamma}|\le C_M,
$$ 
is bounded in terms of the (positive) initial masses $M_1,M_2$ by various constants 
$C_M=C_M(\alpha,\beta,\gamma,M_1,M_2)$.

{
Moreover, we introduce the following deviations  
with zero mean value:
\begin{equation}
\dela(x) = U - \Aq, \quad \delb(x) = V - \Bq,\quad 
\delc(x)=W - \Cq, \qquad\ov{\dela}=\ov{\delb}=\ov{\delc}=0.
\label{ABCDan1}
\end{equation}}
\medskip

\noindent\underline{Step 1a:} We consider for a constant $K> 0$ the set 
$$
S:= \left\{x\in\Omega \ | \ |\dela| \le K,\quad |\delb| \le K,\quad 
|\delc| \le K\right\}.
$$

The set $S$ constitutes the part of the state space $(U,V,W)=(\Aq+\dela,\Bq+\delb,\Cq+\delc$) which includes the states close to equilibrium at which 
$(\Aq,\Bq,\Cq)=(\Ai,\Bi,\Ci)$ and $(\dela,\delb,\delc)=0$ holds. Using the bounds $-\Aq \le \dela\le K$, $-\Bq \le \delb\le K$ and $-\Cq \le \delc\le K$ (where to lower bounds are always satisfied due to definition \eqref{ABCDan1}), we have 
with $\alpha, \beta, \gamma \ge 1$
\begin{align*}
U^{\alpha}V^{\beta}&= (\Aq+\dela)^{\alpha}(\Bq+\delb)^{\beta} = \Aq^{\alpha}\Bq^{\beta} + T_1(x)(\dela+\delb),\\
W^{\gamma}&= (\Cq+\delc)^{\gamma} = \Cq^{\gamma} + T_2(x)\delc,
\end{align*}
for bounded remainder terms 
{
$T_1$ and $T_2$, which satisfy $|T_1(x)|\le C(K^{\alpha+\beta-1})$ and $|T_2(x)|\le C(K^{\gamma-1})$, respectively, for all $x\in S$.}
Thus, we can factorise {and estimate with Young's inequality}
\begin{align}
\|U^{\alpha}V^{\beta}-W^{\gamma}\|_{L^2(S)}^2 &=  \|\Aq^{\alpha}\Bq^{\beta}-\Cq^{\gamma}\|_{L^2(S)}^2\nonumber\\
&\quad +2\int_{S}(\Aq^{\alpha}\Bq^{\beta}-\Cq^{\gamma})
\Bigl(T_1(\dela+\delb)-T_2\delc\Bigr)\,dx\nonumber\\
&\quad +\|T_1(\dela+\delb)-T_2\delc\|_{L^2(S)}^2\nonumber\\
&{\ge \frac{1}{2}\|\Aq^{\alpha}\Bq^{\beta}-\Cq^{\gamma}\|_{L^2(S)}^2
-\|T_1(\dela+\delb)-T_2\delc\|_{L^2(S)}^2}\nonumber\\
&\ge\frac{1}{2}\|\Aq^{\alpha}\Bq^{\beta}-\Cq^{\gamma}\|_{L^2(S)}^2
-C(K,M_1,M_2)(\ov{\dela^2}+\ov{\delb^2}+\ov{\delc^2}),
\label{ABCDDHonec}
\end{align}
{where we have used that $|T_1|\le C(K)$, $|T_2|\le C(K)$.}
\medskip

\noindent\underline{Step 1b:} It remains to consider the orthogonal set of all points 
$$
S^\perp:= \left\{x\in\Omega \ |\  |\dela| > K,\quad\text{or}\quad |\delb| > K,\quad\text{or}\quad |\delc| > K\right\}.
$$
By Chebyshev's inequality it follows that
$$
\left| \{\dela^2 > K^2\}\right| \le \frac{\ov{\dela^2}}{K^2}, \qquad
\left| \{\delb^2 > K^2\}\right| \le \frac{\ov{\delb^2}}{K^2}, \qquad
\left| \{\delc^2 > K^2\}\right| \le \frac{\ov{\delc^2}}{K^2}.
$$
Moreover, we observe that $|\{|\dela| > K\}|=|\{\dela^2 > K^2\}|$. 
This implies that $|S^\perp|\le C(K)(\ov{\dela^2}+\ov{\delb^2}+\ov{\delc^2})$.
Therefore, since $|\Aq^{\alpha}\Bq^{\beta}-\Cq^{\gamma}|\le C_M$ is bounded, we have 
$$
\|\Aq^{\alpha}\Bq^{\beta}-\Cq^{\gamma}\|_{L^2(S^\perp)}^2
\le  C(K,M_1,M_2)(\ov{\dela^2}+\ov{\delb^2}+\ov{\delc^2})
$$
and
\begin{multline}
\|U^{\alpha}V^{\beta}-W^{\gamma}\|_{L^2(S^\perp)}^2 \ge0\\
\ge  \|\Aq^{\alpha}\Bq^{\beta}-\Cq^{\gamma}\|_{L^2(S^\perp)}^2
-C(K,M_1,M_2)(\ov{\dela^2}+\ov{\delb^2}+\ov{\delc^2}).
\label{ABCDDHoned}
\end{multline}
Altogether, the estimates \eqref{ABCDDHonec} and \eqref{ABCDDHoned} 
finish the proof of the estimate \eqref{ABCnonlin} and conclude Step 1.
\medskip

We remark that estimate \eqref{ABCDDHoned} could be restricted to the subset 
$$
S^\perp_{\mu}:= \{x\in S^\perp \ | \ |U^{\alpha}V^{\beta}-W^{\gamma}|\le \mu\},
$$
for any $\mu>0$ since the estimate \eqref{ABCnonlin} holds naturally at all points 
$x\in\Omega$ where $|W^\gamma-U^\alpha V^\beta|>\mu$ holds, i.e. 
$$
\|U^{\alpha}V^{\beta}-W^{\gamma}\|_{L^2(S^\perp_{\mu})}^2
\ge \min\left\{\frac{\mu^2}{|\Aq^{\alpha}\Bq^{\beta}-\Cq^{\gamma}|^2},1\right\}
\|\Aq^{\alpha}\Bq^{\beta}-\Cq^{\gamma}\|_{L^2(S^\perp_{\mu})}^2.
$$
\medskip

\noindent\textbf{Step 2 :}
Inserting estimate \eqref{ABCnonlin} into \eqref{ABCDDHone} allows now to apply Lemma \ref{ABCDConst} 
after expanding the left hand side of \eqref{ABCDDHone} in terms of 
\begin{equation*}
\dela(x) = U - \Aq, \quad \delb(x) = V - \Bq,\quad 
\delc(x)=W - \Cq, 
\end{equation*}
Moreover, we shall adapt the ansatz used in Lemma \ref{ABCDConst} in the following fashion
\begin{equation*}
\Aqq = \Ai^2(1+\ma)^2,\qquad 
\Bqq = \Bi^2(1+\mb)^2,\qquad 
\Cqq = \Ci^2(1+\mc)^2,
\label{ABCDan2}
\end{equation*}
and $-1\le\ma,\mb,\mc$, which recovers the relations \eqref{ABCDConstABCDmu1} and \eqref{ABCDConstABCDmu2} as well as the subsequent study of $\ma(\mc)=-R_1(\mc)\mc$ and $\mb(\mc)=-R_2(\mc)\mc$ as functions of $\mc\in[-1,\mcu]$ (see Lemma \ref{ABCDConst}).
\smallskip

The definitions of \fref{ABCDan1} imply readily for the  right-hand side of \fref{ABCDDHone} that
\begin{equation}
\|U-\Aq\|_2^2=\Aqq-\Aq^2=\ov{\dela^2}=\|\dela\|_2^2,\label{ABCDDHoner2}
\end{equation} 
and analog $\ov{\delb^2}=\Bqq-\Bq^2$ and $\ov{\delc^2}=\Cqq-\Cq^2$. Thus, 
$\frac{\ov{\dela^2}}{\sqrt{\Aqq}+\Aq} = \sqrt{\Aqq}-\Aq$
and it follows that 
\begin{eqnarray}
\Aq = \Ai(1+\ma)-\frac{1}{\sqrt{\Aqq}+\Aq}\,\ov{\dela^2},
\label{ABCDDHoner1} 
\end{eqnarray}
and analog expressions hold for $\Bq$ and $\Cq$.
Therefore, the first terms on the left-hand side of \fref{ABCDDHone} expands with \fref{ABCDDHoner1} to 
\begin{eqnarray}
\|U-\Ai\|_2^2 = \Ai^2\ma^2+\frac{2\Ai}{\sqrt{\Aqq}+\Aq}\,\ov{\dela^2},
\label{ABCDDHonel} 
\end{eqnarray} 
and analog for $\|V-\Bi\|_2^2$ and $\|W-\Ci\|_2^2$.
Note that \eqref{ABCDDHonel} is quadratic in the perturbation $\ma$ and the deviation $\dela$.
\smallskip

We point out that in \eqref{ABCDDHonel} the expansions in terms of $\ov{\dela^2}$ is unbounded for vanishing $\Aqq\ge \ov{U}^2$. Although such states with $\Aqq\sim0$ or $\Bqq\sim0$ or $\Cqq\sim0$ (in which the positive initial mass is concentrated in the remaining non-vanishing concentrations) are highly degenerate states far from equilibrium, we are nevertheless forced to distinguish two cases.
\medskip

 Firstly we consider the \medskip\\
\underline{Case $\Aqq\ge\eps^2$, $\Bqq\ge\eps^2$, $\Cqq\ge\eps^2$:} for a constant $\eps>0$ to be chosen later in the opposite cases for small $\Aqq$, $\Bqq$, $\Cqq$.
In this case, the expansion \fref{ABCDDHonel} is not degenerated 
and {
we have 
$
\frac{1}{\sqrt{\Aqq}+\Aq},\frac{1}{\sqrt{\Cqq}+\Cq},\frac{1}{\sqrt{\Cqq}+\Cq}
\le C(\eps)
$
and 
\begin{equation}\label{dfgdfg}
\frac{2\Ai}{\sqrt{\Aqq}+\Aq},\frac{2\Ci}{\sqrt{\Cqq}+\Cq},\frac{2\Ci}{\sqrt{\Cqq}+\Cq}\le C(\eps,\alpha,\beta,\gamma,M_1,M_2).
\end{equation}
Moreover, we estimate first by using \eqref{ABCDDHoner1} that
\begin{multline}\label{sdfsdf}
\|\Aq^{\alpha}\Bq^{\beta}-\Cq^{\gamma}\|_2^2 \ge
\Ci^{2\gamma}\left[(1+\ma)^{\alpha}(1+\mb)^{\beta}-
(1+\mb)^{\gamma}\right]^2\\
-C(\eps,\alpha,\beta,\gamma,M_1,M_2)\left(\ov{\dela^2}+\ov{\delb^2}+\ov{\delc^2}\right).
\end{multline}
Next, we insert \fref{ABCDDHonel} into the left-hand side  of \fref{ABCDDHone} (and use \eqref{dfgdfg}) and insert \eqref{ABCnonlin}, \fref{ABCDDHoner2} and 
\eqref{sdfsdf} into the  right-hand side  of \fref{ABCDDHone}. Thus, 
in order to finish the proof of the Lemma, it is sufficient to show that }
\begin{multline}\label{nondeg}
\Ai^2\ma^2+\Bi^2\mb^2+\Ci^2\mc^2 \le 
K_1{\Ci^{2\gamma}\left[(1+\ma)^{\alpha}(1+\mb)^{\beta}-
(1+\mb)^{\gamma}\right]^2}\\ 
+ \left(K_2-C(\eps,\alpha,\beta,\gamma,K,K_1,M_1,M_2) \right)
\left(\ov{\dela^2}+\ov{\delb^2}+\ov{\delc^2}\right),
\end{multline}

Now, provided that we chose $K_2\ge C(\eps,\alpha,\beta,\gamma,K,K_1,M_1,M_2)$ sufficiently large, the inequality \eqref{nondeg} follows directly from Lemma \ref{ABCDConst}, 
in particular {from the reformulated estimate 
\eqref{ABCDConstDHa}, which is sufficient to prove estimate 
\eqref{ABCDConstDH}.}
\medskip

Secondly we  treat the \medskip\\
\underline{Case $\Aqq\le\eps^2$ or $\Bqq\le\eps^2$ or $\Cqq\le\eps^2$:} In these degenerate cases, the expansion \eqref{ABCDDHoner1} is not useful.
However, since the corresponding states $(U,V,W)$ are far from equilibrium, the inequality in \fref{ABCDDHone} can be established by direct estimates (i.e. without Lemma \ref{ABCDConst}).

First, we observe that the {left}-hand side  of \eqref{ABCDDHone} is bounded in terms of the conservation laws \eqref{ABCCo}
\begin{multline}
\|U-\Ai\|_2^2+\|V-\Bi\|_2^2+\|W-\Ci\|_2^2 \le
\\\iO (u+v+w + \ai+\bi+\ci)\,dx
\le C_M(M_1,M_2,\alpha,\beta,\gamma). \label{ABCDDHonf}
\end{multline}
Next, we consider the case that $\Aq\le\eps^2$, where $\eps$ is a constant to be specified below. Rewriting the conservation law $\gamma\Aqq+\alpha\Cqq=M_1$ using $\Aqq-\Aq^2=\ov{\dela^2}$ and $\Cqq-\Cq^2=\ov{\delc^2}$, we estimate with $\Aq\le\eps^2$ that
$$
\Cq^2\ge \frac{M_1}{\alpha}-\ov{\delc^2}
-\frac{\gamma}{\alpha}\left(\ov{\dela^2}+\eps^2\right).
$$
Thus, since $M_1>0$
\begin{multline*}
\left(\Cq^\gamma-\Aq^\alpha \Bq^\beta\right)^2
\ge  \Cq^{2\gamma}-2\Cq^\gamma\Aq^\alpha\Bq^\beta\\
\ge \left(\frac{M_1}{\alpha}-\ov{\delc^2}
-\frac{\gamma}{\alpha}\ov{\dela^2}-\frac{\gamma}{\alpha}\eps^2\right)^{\gamma}-2\Cq^\gamma\Bq^\beta\eps^{\alpha}\ge
C>0,
\end{multline*}
for a strictly positive constant $C>0$
provided that $\ov{\dela^2}+\ov{\delc^2}$ is sufficiently small and $\eps$ is subsequently chosen small enough. 

Therefore, there exist two constants $K_1$ and $K_2$ such that
\begin{multline}
\|U-\Ai\|_2^2+\|V-\Bi\|_2^2+\|W-\Ci\|_2^2 \le
C_M \\
\le K_1 \left(\Cq^\gamma-\Aq^\alpha \Bq^\beta\right)^2
+ K_2 \left(\ov{\dela^2}+\ov{\delb^2}+\ov{\delc^2}\right).
\label{ABCDHong}
\end{multline}
In the opposite case that $\ov{\dela^2}+\ov{\delc^2}$ is not sufficiently small \eqref{ABCDHong} still holds for a sufficiently large constant $K_2$.

The case that $\Bq\le\eps^2$ can be treat analog to  the case $\Aq\le\eps^2$ above.
\medskip

Finally, in the case that $\Cq\le\eps^2$, we estimate the conservation laws $\gamma\Aqq+\alpha\Cqq=M_1$ and $\gamma\Bqq+\beta\Cqq=M_2$ like above as
$$
\Aq^2\ge \frac{M_1}{\gamma}-\ov{\dela^2}
-\frac{\alpha}{\gamma}\left(\ov{\delc^2}+\eps^2\right), \qquad
\Bq^2\ge \frac{M_2}{\gamma}-\ov{\delb^2}
-\frac{\beta}{\gamma}\left(\ov{\delc^2}+\eps^2\right),
$$
which yields 
\begin{align*}
\left(\Cq^\gamma-\Aq^\alpha \Bq^\beta\right)^2
\ge& 
\biggl(\frac{M_1}{\gamma}-\ov{\dela^2}-\frac{\alpha}{\gamma}\left(\ov{\delc^2}+\eps^2\right)\biggr)^{\alpha}
\biggl(\frac{M_2}{\gamma}-\ov{\delb^2}-\frac{\beta}{\gamma}\left(\ov{\delc^2}+\eps^2\right)\biggr)^{\beta}\\
&-2\Aq^\alpha\Bq^\beta\eps^{\gamma}\ge C>0,
\end{align*}
for a strictly positive constant $C>0$
provided that $\ov{\dela^2}$,    $\ov{\delc^2}$, $\ov{\delc^2}$ are sufficiently small and $\eps$ is subsequently chosen small enough. 

Similar to above, this yields \eqref{ABCDHong}
and together with \eqref{ABCnonlin} finishes the proof of the Lemma \ref{ABCDDHCore}. 
\end{proof}
\bigskip

Next, we recall the Logarithmic Sobolev inequality, which holds 
on a bounded domain (without confining potential) as a consequence of the inequalities of Sobolev and Poincar\'e:

\begin{Lem}[Logarithmic Sobolev inequality] \label{Lemma:LogSob}
 Let $\phi :\Omega\rightarrow \R$ where $\Omega$ is a bounded domain in $\R^{N}$ such that the Poincar\'e (-Wirtinger)
 and Sobolev inequalities hold:
\begin{eqnarray*}
& \|\phi - \frac{1}{|\Omega|}\iO \phi \, dx\|_{L^2(\Omega)}^2 \le P(\Omega) \, \|\nx \phi\|_{L^2(\Omega)}^2, \\
&\|\phi\|_{L^q(\Omega)}^2 \le C_1(\Omega) \,
\|\nx \phi\|_{L^2(\Omega)}^2 + C_2(\Omega)\,
 \|\phi\|_{L^2(\Omega)}^2\,, 
\qquad \frac{1}{q} = \frac{1}{2}-\frac{1}{N},
\end{eqnarray*}
Then, the logarithmic Sobolev inequality 
\begin{equation}\label{LogSob}
 \iO \phi^2 \ln\left(\frac{\phi^2}{\|\phi\|_{L^2(\Omega)}^2}\right) dx \le L(\Omega,N)
\, \|\nx \phi\|_{L^2(\Omega)}^2
\end{equation}
holds for some constant $L(\Omega,N)>0$ in any space dimension $N$.
\end{Lem}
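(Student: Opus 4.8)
The plan is to establish \eqref{LogSob} in two stages: first a \emph{defective} logarithmic Sobolev inequality, obtained by combining the Sobolev embedding with Jensen's inequality, and then the removal of the defect term by means of the Poincar\'e inequality. Throughout I use the paper's normalisation $|\Omega|=1$, under which $dx$ is a probability measure on $\Omega$ and $\|\phi\|_{L^2(\Omega)}^2=\iO\phi^2\,dx$ is the mean of $\phi^2$; I abbreviate the left-hand side of \eqref{LogSob} by $\mathrm{Ent}(\phi^2):=\iO\phi^2\ln\bigl(\phi^2/\|\phi\|_{L^2(\Omega)}^2\bigr)\,dx$. Observe that both $\mathrm{Ent}(\phi^2)$ and $\|\nx\phi\|_{L^2(\Omega)}^2$ are homogeneous of degree two in $\phi$, so the target inequality is scale invariant.

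\textbf{Step 1 (Jensen $+$ Sobolev $\Rightarrow$ a defective inequality).} For $N\ge 3$ set $q=\frac{2N}{N-2}$ (for $N\le 2$ take any finite $q>2$, for which $H^1(\Omega)\hookrightarrow L^q(\Omega)$ still holds, changing only the numerical constants). Writing $d\nu:=\frac{\phi^2}{\|\phi\|_{L^2}^2}\,dx$, which is a probability measure, and using the concavity of the logarithm (Jensen) on the representation $\ln\frac{\phi^2}{\|\phi\|_{L^2}^2}=\frac{2}{q-2}\ln\bigl(\frac{\phi^2}{\|\phi\|_{L^2}^2}\bigr)^{(q-2)/2}$, I would obtain
\[
\mathrm{Ent}(\phi^2)=\|\phi\|_{L^2}^2\iO\ln\frac{\phi^2}{\|\phi\|_{L^2}^2}\,d\nu
\le \frac{N}{2}\,\|\phi\|_{L^2(\Omega)}^2\,\ln\!\Bigl(\frac{\|\phi\|_{L^q(\Omega)}^2}{\|\phi\|_{L^2(\Omega)}^2}\Bigr),
\]
the constant being $\frac{q}{q-2}$ in general. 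Inserting the Sobolev inequality $\|\phi\|_{L^q}^2\le C_1\|\nx\phi\|_{L^2}^2+C_2\|\phi\|_{L^2}^2$ and then the elementary bound $\ln y\le \theta y-1-\ln\theta$ (valid for all $y,\theta>0$) with the optimal choice $\theta=1/C_2$ yields
\[
\mathrm{Ent}(\phi^2)\le A\,\|\nx\phi\|_{L^2(\Omega)}^2 + B\,\|\phi\|_{L^2(\Omega)}^2,
\qquad A=\tfrac{N}{2}\tfrac{C_1}{C_2},\quad B=\tfrac{N}{2}\ln C_2 .
\]

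\textbf{Step 2 (tightening via Poincar\'e).} The remaining task, and the real obstacle, is to remove the defect $B\|\phi\|_{L^2}^2$: for a constant $\phi$ the gradient vanishes while this term is positive (when $C_2>1$), so no gradient bound can absorb it directly. This is precisely where the Poincar\'e inequality enters. I would invoke Rothaus's lemma: writing $\ov\phi=\iO\phi\,dx$ and $\pt=\phi-\ov\phi$ (zero mean), one has $\mathrm{Ent}(\phi^2)\le \mathrm{Ent}(\pt^2)+2\|\pt\|_{L^2(\Omega)}^2$. Applying the defective inequality of Step 1 to $\pt$ (noting $\nx\pt=\nx\phi$) and then the Poincar\'e inequality $\|\pt\|_{L^2}^2\le P(\Omega)\|\nx\phi\|_{L^2}^2$ gives $\mathrm{Ent}(\pt^2)\le (A+B\,P(\Omega))\|\nx\phi\|_{L^2}^2$, whence
\[
\mathrm{Ent}(\phi^2)\le \bigl(A+(B+2)P(\Omega)\bigr)\,\|\nx\phi\|_{L^2(\Omega)}^2,
\]
which is \eqref{LogSob} with $L(\Omega,N)=A+(B+2)P(\Omega)$ (in the harmless case $C_2<1$ one has $B<0$ and the defect can simply be dropped).

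The only genuinely delicate point is Step 2, namely the justification of Rothaus's comparison $\mathrm{Ent}(\phi^2)\le\mathrm{Ent}(\pt^2)+2\|\pt\|_{L^2}^2$; this is a standard tightening result following from a direct convexity (Taylor) computation on the entropy together with the fact that subtracting the mean does not increase the Dirichlet energy. Everything else is a routine combination of Jensen's inequality, the Sobolev embedding, and the scalar bound $\ln y\le\theta y-1-\ln\theta$, and all constants are explicit in terms of $N$, the Poincar\'e constant $P(\Omega)$, and the Sobolev constants $C_1(\Omega)$, $C_2(\Omega)$.
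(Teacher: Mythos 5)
Your argument is correct and is essentially the proof the paper itself points to (the paper gives no proof, only citing Stroock's argument via \cite{DFIFIP}, where exactly this two-stage route is carried out): Jensen's inequality applied to the probability measure $\phi^2\,dx/\|\phi\|_{L^2}^2$ combined with the Sobolev embedding yields the defective inequality $\mathrm{Ent}(\phi^2)\le A\|\nx\phi\|_{L^2}^2+B\|\phi\|_{L^2}^2$, and the defect is then removed by Rothaus's lemma together with the Poincar\'e inequality. The only caveat is that Rothaus's comparison $\mathrm{Ent}(\phi^2)\le\mathrm{Ent}\bigl((\phi-\ov{\phi})^2\bigr)+2\|\phi-\ov{\phi}\|_{L^2}^2$ is itself a nontrivial (though classical) inequality that you invoke as a black box with only a one-line justification; a fully self-contained proof would have to supply that computation.
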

The proof follows an argument of Strook \cite{Logsob}, see \cite{DFIFIP}.
\vskip3mm
We are now in position to state the entropy entropy-dissipation estimate 
for the relative entropy $E-E_{\infty}$ defined in \eqref{ABCEntr} and the entropy dissipation $D$ \eqref{ABCEntrDiss}, which holds for admissible 
functions regardless if or if not they are solutions (at a given time $t$) of the System \eqref{ABCa} -- \eqref{ABCc}: 
\begin{Pro} \label{ABCDDH}
{Let $\Omega$ be a connected open set of $\R^N$ $(N\ge 1)$ with
sufficiently smooth boundary $\partial\Omega$  such that Poincar\'e's inequality and the Logarithmic Sobolev inequality hold.}

Let $u$, $v$, $w$ be measurable non-negative functions from $\Omega$ into $\R^+$ 
such that 
$\iO (\gamma u + \alpha w) = M_1$ and $\iO (\gamma v + \beta w) = M_2$. Then,
\begin{equation*}
D(u,v,w) \ge K (E(u,v,w)-E(\ai,\bi,\ci)). 
\end{equation*}
for a constant $K=K(d_1,d_2,d_3,\alpha,\beta,\gamma,M_1,M_2,P(\Omega),L(\Omega))$ depending only on the positive diffusion coefficients $d_1,d_2,d_3>0$, the stoichiometric coefficients $\alpha,\beta,\gamma\ge1$, the positive initial masses $M_1,M_2>0$ and the Poincar\'e and Logarithmic-Sobolev constants $P(\Omega)$ and $L(\Omega)$ of the domain $\Omega$.
\end{Pro}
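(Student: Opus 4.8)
The plan is to work in the square-root variables $U=\sqrt u$, $V=\sqrt v$, $W=\sqrt w$ and to bound $D$ from below by two structurally independent pieces — the three Fisher-information (gradient) terms and the reaction term — each of which will control a matching piece of the relative entropy $E(u,v,w)-E(\ai,\bi,\ci)$. First I would rewrite the diffusive part: since $|\nx u|^2/u = 4|\nx U|^2$, it equals $4\da\|\nx U\|_2^2 + 4\db\|\nx V\|_2^2 + 4\dc\|\nx W\|_2^2$. For the reaction part I would use the elementary inequality $(a-b)\ln(a/b)\ge 4(\sqrt a-\sqrt b)^2$ for $a,b>0$ (which reduces, after $s=\sqrt{a/b}$, to $|\ln s|\ge \tfrac{2|s-1|}{s+1}$), applied pointwise with $a=u^\alpha v^\beta$ and $b=w^\gamma$, whose square roots are exactly $U^\alpha V^\beta$ and $W^\gamma$. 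This produces
\[
D(u,v,w)\ \ge\ 4\min\{\da,\db,\dc\}\bigl(\|\nx U\|_2^2+\|\nx V\|_2^2+\|\nx W\|_2^2\bigr)+4\,\|U^\alpha V^\beta-W^\gamma\|_2^2 .
\]

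Next I would split the relative entropy additively in each species. Writing $\Aqq=\iO u\,dx=\|U\|_2^2$ for the spatial mean of $u$, the convexity identity gives
\[
\iO\Bigl(u\ln\tfrac{u}{\ai}-(u-\ai)\Bigr)dx=\iO u\ln\tfrac{u}{\Aqq}\,dx+\Bigl(\Aqq\ln\tfrac{\Aqq}{\ai}-(\Aqq-\ai)\Bigr),
\]
and analogously for $v$ and $w$. The first (\emph{fluctuation}) term equals $\iO U^2\ln\tfrac{U^2}{\|U\|_2^2}\,dx$, which the Logarithmic Sobolev inequality (Lemma \ref{Lemma:LogSob}) bounds by $L(\Omega)\|\nx U\|_2^2$; summed over the three species this is absorbed into the gradient part of $D$. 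For the second (\emph{homogeneous}) term I would use that the conservation laws confine $\Aqq,\Bqq,\Cqq$ to a fixed bounded range depending only on $M_1,M_2$, on which the relative-entropy function $x\mapsto x\ln\tfrac{x}{x_\infty}-(x-x_\infty)$ satisfies $\le C\,(\sqrt x-\sqrt{x_\infty})^2$; together with Jensen's inequality $(\sqrt{\Aqq}-\Ai)^2\le\|U-\Ai\|_2^2$, the homogeneous parts are controlled by $C\bigl(\|U-\Ai\|_2^2+\|V-\Bi\|_2^2+\|W-\Ci\|_2^2\bigr)$.

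At this stage Lemma \ref{ABCDDHCore} carries the essential weight: since $\Aqq,\Bqq,\Cqq$ satisfy the conservation laws, it bounds $\|U-\Ai\|_2^2+\|V-\Bi\|_2^2+\|W-\Ci\|_2^2$ by $K_1\|W^\gamma-U^\alpha V^\beta\|_2^2+K_2\bigl(\|U-\Aq\|_2^2+\|V-\Bq\|_2^2+\|W-\Cq\|_2^2\bigr)$. The first summand is exactly the reaction part of $D$, while the deviation terms $\|U-\Aq\|_2^2$ are variances and hence, by Poincar\'e's inequality, bounded by $P(\Omega)\|\nx U\|_2^2$, again absorbed into the gradient part of $D$. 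Collecting terms yields $E(u,v,w)-E(\ai,\bi,\ci)\le A\bigl(\|\nx U\|_2^2+\|\nx V\|_2^2+\|\nx W\|_2^2\bigr)+B\,\|U^\alpha V^\beta-W^\gamma\|_2^2$ for explicit constants $A,B$ built from $L(\Omega)$, $P(\Omega)$, $K_1$, $K_2$ and $C$; comparing with the lower bound for $D$ above and choosing $K=\min\{4\min\{\da,\db,\dc\}/A,\ 4/B\}$ concludes the proof.

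The genuinely nontrivial analytic content is packaged into Lemma \ref{ABCDDHCore} (already proved), which must handle the degenerate regime where one of $\Aqq,\Bqq,\Cqq$ nearly vanishes and the expansion around equilibrium degenerates. Within the assembly above, the delicate point I would watch is the homogeneous term: one must check that $x\ln\tfrac{x}{x_\infty}-(x-x_\infty)\le C(\sqrt x-\sqrt{x_\infty})^2$ holds with a constant uniform over the full admissible range of averages (the ratio is continuous on the bounded interval with a removable singularity at $x=x_\infty$), so that $C$ depends only on the fixed parameters $\alpha,\beta,\gamma,M_1,M_2$ and not on the state. Everything else is a bookkeeping combination of the Log-Sobolev inequality, Poincar\'e's inequality and Lemma \ref{ABCDDHCore}.
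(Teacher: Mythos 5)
Your proposal is correct and follows essentially the same route as the paper: the additive splitting of the relative entropy into a fluctuation part (controlled by the Logarithmic Sobolev inequality) and a homogeneous part (controlled via the bounded ratio $\bigl(x\ln(x/x_\infty)-(x-x_\infty)\bigr)/(\sqrt{x}-\sqrt{x_\infty})^2$ and Jensen's inequality), the elementary bound $(a-b)\ln(a/b)\ge 4(\sqrt{a}-\sqrt{b})^2$ for the reaction term, Poincar\'e's inequality for the deviations, and Lemma \ref{ABCDDHCore} as the key functional inequality. The only cosmetic difference is that the paper splits the Fisher information with an interpolation parameter $\theta$ before distributing it between the Poincar\'e and Log-Sobolev absorptions, whereas you distribute it at the end; the logic is identical.
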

%
\begin{proof}[\bf{Proof of Proposition \ref{ABCDDH}}]\hfill\\ 
We begin by rewriting the relative entropy \eqref{ABCRelEntr} using the following additivity property
\begin{align}
E(t) - E_{\infty} 
=&\iO \left( u\ln\Bigl(\frac{u}{\aq}\Bigr)+v\ln\Bigl(\frac{v}{\bq}\Bigr)
+w \ln\Bigl(\frac{w}{\cq}\Bigr) \right)dx\label{EEDa}\\
&+\aq\ln\Bigl(\frac{\aq}{\ai}\Bigr)-(\aq-\ai)+\bq\ln\Bigl(\frac{\bq}{\bi}\Bigr)-(\bq-\bi) \label{EEDb}\\
&+\cq \ln\Bigl(\frac{\cq}{\ci}\Bigr)-(\cq-\ci),\nonumber
\end{align}
where the integral \eqref{EEDa} corresponds to the relative entropy of 
concentrations ($u,v,w$) with respect to their averages ($\aq,\bq,\cq$) 
and \eqref{EEDb} is the relative entropy of the 
averages ($\aq,\bq,\cq$) with respect to the equilibrium ($\ai,\bi,\ci$). 

The integral \eqref{EEDa} can be estimated thanks to
the Logarithmic Sobolev inequality \eqref{LogSob} 
by 
\begin{equation} 
\iO u\ln\Bigl(\frac{u}{\aq}\Bigr)\,dx 
\le L(\Omega) \iO \frac{|\nx u|^2}{u} dx= 4L(\Omega) \iO \left|\nx U\right|^2 dx,
\label{EEDc}
\end{equation}
and similar for $v$ and $w$. Thus,  \eqref{EEDa} is bounded above by the Fisher information part of entropy dissipation  \eqref{ABCEntrDiss}.

On the other hand, the relative entropy \eqref{EEDb} can be estimated in the following way:
We define the function
$$\phi(x,y) = \frac{x\,\ln(x/y) - (x-y)}{(\sqrt{x}-\sqrt{y})^2}=\phi({x}/{y},1), 
$$
which extends continuously to $[0,+\infty)\times(0,+\infty)\rightarrow \R_+$
and note that thanks to the conservation laws \eqref{ABCCo}
the expressions 
$$
\phi(\aq/\ai,1)\le C_M,\qquad\phi(\bq/\bi,1)\le C_M,\qquad\phi(\cq/\ci,1)\le C_M,
$$
are bounded by a constant $C_M(M_1,M_2)$ in terms of the initial masses. Thus, 
{by using as in Step 2 of Lemma \ref{ABCDDHCore}
that $ \sqrt{\aq}= \sqrt{\Aqq} = \Ai(1+\ma)$, we have }
\begin{equation} 
\aq\ln\Bigl(\frac{\aq}{\ai}\Bigr)-(\aq-\ai)\le C_M
(\sqrt{\aq}-\sqrt{\ai})^2\le C_M \Ai^2 \,\ma^2,
\label{EEDd}
\end{equation}
and similar estimates hold for $\bq$ and $\cq$.
\medskip

Next, we estimate the entropy dissipation \eqref{ABCEntrDiss} below by using the elementary inequality $(u^{\alpha}v^{\beta}-w^{\gamma})(\ln(u^{\alpha}v^{\beta})-\ln(w^{\gamma}))\ge 
4(U^{\alpha}V^{\beta}-W^{\gamma})^2$, Poincar\'e's inequality and the Logarithmic Sobolev inequality and obtain the estimate 
\begin{align}
D(u,v,w) \ge&\ 4\left\|U^{\alpha}V^{\beta}-W^{\gamma}\right\|^2_2\nonumber \\
&+ \theta C_P\left(\left\|U-\Aq\right\|^2_2
+ \left\|V-\Bq\right\|^2_2
+ \left\|W-\Cq\right\|^2_2\right)\nonumber \\
&+ 4(1-\theta)\Bigl(\iO \left|\nx U\right|^2 dx
+ \iO \left|\nx V\right|^2 dx
+ \iO \left|\nx W\right|^2 dx\Bigr),
\label{EEDf}
\end{align}
for a constant $C_P=C_P(\da,\db,\dc,P(\Omega))$ 
with the Poincar\'e constant $P(\Omega)$ and a constant $\theta\in(0,1)$ to be chosen {such that the last term on the  right-hand side  of \eqref{EEDf} controls via the Logarithmic Sobolev inequality \eqref{EEDc} the first contribution to the relative entropy $E-E_{\infty}$, i.e. the integral \eqref{EEDa}. 
}

Combining the expressions of the relative entropy \eqref{ABCRelEntr} and the entropy dissipation \eqref{ABCEntrDiss}
with the estimates \eqref{EEDd} and \eqref{EEDf} and choosing $\theta$ in \eqref{EEDf} {in order to control \eqref{EEDa} with the last term of  \eqref{EEDf}}, it remains to show that
\begin{multline*}
C_M\left(\Ai^2 \ma^2+\Bi^2 \mb^2+\Ci^2 \mc^2\right) \le 
K_1 \left\|U^{\alpha}V^{\beta}-W^{\gamma}\right\|^2_2 \\
+ K_2\left(\left\|U-\Aq\right\|^2_2
+ \left\|V-\Bq\right\|^2_2
+ \left\|W-\Cq\right\|^2_2\right),
\end{multline*}
which is a consequence of Lemma \ref{ABCDDHCore} 
{after observing that 
$$
\|U-\Ai\|_2^2 \le \|U - \Aq\|_2^2 + \|\Aq-\Ai\|_2^2 = \|U - \Aq\|_2^2 + \Ai^2\ma^2.
$$
and analog for $\|V-\Bi\|_2^2$ and $\|W-\Ci\|_2^2$.}
This ends the proof of Proposition \ref{ABCDDH}.
\end{proof}

\section{Estimates of convergence towards equilibrium} \label{Conv}

In this section, we use the estimates of Section \ref{Convergence} in order 
to prove Theorem \ref{tt2}.
We begin with a Cziszar-Kullback type inequality relating
convergence in relative entropy to convergence in $L^1$.

\begin{Pro} \label{ABCDCK}
For all (measurable) functions $u,v,w : \Omega \to (\R_+)^3$, 
for which $\iO (\gamma u + \alpha w) = M_{1}>0$ and $\iO (\gamma v + \beta w) = M_{2}>0$ holds, we have the following Cziszar-Kullback type inequality
for the entropy functional $E(u,v,w)$ defined in \fref{ABCEntr}:
\begin{multline*} 
E(u,v,w) - E(\ai,\bi,\ci) \\
\ge C\left(\|u-\ai\|_{L^1(\Omega)}^2+\|v-\bi\|_{L^1(\Omega)}^2+\|w-\ci\|_{L^1(\Omega)}^2\right),
\end{multline*}
for a constant $C=C(M_1,M_2,\alpha,\beta,\gamma)>0$ 
depending on the masses $M_1,M_2>0$ the stoichiometric coefficients 
$\alpha,\beta,\gamma\ge1$.
\end{Pro}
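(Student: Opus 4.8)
The plan is to reduce the estimate to three independent scalar Csiszar--Kullback--Pinsker inequalities, one for each species, and then to combine them using only upper bounds on the individual masses that follow from the conservation laws \eqref{ABCCo}. First I would invoke the relative-entropy representation \eqref{ABCRelEntr}, which writes
$$
E(u,v,w) - E(\ai,\bi,\ci) = \iO \bigl(\Phi(u|\ai) + \Phi(v|\bi) + \Phi(w|\ci)\bigr)\,dx,
\qquad \Phi(x|y):=x\ln(x/y)-x+y,
$$
so that it suffices to bound each integral $\iO \Phi(u|\ai)\,dx$ below by $C\,\|u-\ai\|_{L^1(\Omega)}^2$ and then to sum.

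The key step is an elementary pointwise inequality. Writing $\Phi(x|y)=y\,\Psi(x/y)$ with $\Psi(s)=s\ln s - s +1$, I would show that the ratio $(s-1)^2/\bigl((s+1)\Psi(s)\bigr)$ extends to a bounded continuous function on $[0,\infty)$: it tends to $1$ as $s\to 0^+$, it tends to $1$ as $s\to 1$ (using the Taylor expansion $\Psi(s)\sim (s-1)^2/2$, so the common quadratic degeneracy of numerator and denominator is removable), and it tends to $0$ as $s\to\infty$. This furnishes a constant $c>0$, independent of $x$ and $y$, with
$$
\Phi(x|y)\ge c\,\frac{(x-y)^2}{x+y}, \qquad x\ge 0,\ y>0.
$$

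With this at hand, a weighted Cauchy--Schwarz inequality converts the weighted-$L^2$ control into the desired $L^1$ bound:
$$
\|u-\ai\|_{L^1(\Omega)}^2 = \Bigl(\iO \frac{|u-\ai|}{\sqrt{u+\ai}}\,\sqrt{u+\ai}\,dx\Bigr)^2
\le \Bigl(\iO \frac{(u-\ai)^2}{u+\ai}\,dx\Bigr)\Bigl(\iO (u+\ai)\,dx\Bigr)
\le \tfrac{1}{c}\,\bigl(\|u\|_{L^1(\Omega)}+\ai\bigr)\iO \Phi(u|\ai)\,dx.
$$
It then remains to bound the prefactor $\|u\|_{L^1(\Omega)}+\ai$ by a constant depending only on $M_1,M_2,\alpha,\beta,\gamma$; this is exactly where the conservation laws enter, since $\gamma\iO u \le \iO(\gamma u+\alpha w)=M_1$ gives $\|u\|_{L^1(\Omega)}\le M_1/\gamma$, and analogously $\|v\|_{L^1(\Omega)}\le M_2/\gamma$ and $\|w\|_{L^1(\Omega)}\le \min\{M_1/\alpha,\,M_2/\beta\}$, while $\ai,\bi,\ci$ are themselves determined by $M_1,M_2$ through \eqref{39bis}. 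Summing the three species estimates concludes the proof.

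The point to be careful about, rather than a genuine obstacle, is that the individual masses $\iO u$, $\iO v$, $\iO w$ are not conserved (only the two linear combinations in \eqref{ABCCo} are), so the classical equal-mass form of the Csiszar--Kullback inequality does not apply directly. The pointwise/Cauchy--Schwarz route above circumvents this entirely, since it requires only \emph{upper} bounds on the masses, which the conservation laws readily supply; the single genuinely quantitative ingredient is the finiteness of the constant $c$, which rests on the removable quadratic degeneracy of both sides at $s=1$.
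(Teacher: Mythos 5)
Your proposal is correct, and it takes a genuinely different route from the paper. The paper first splits the relative entropy additively into (a) the relative entropy of $u,v,w$ with respect to their spatial averages $\ov{u},\ov{v},\ov{w}$ and (b) the relative entropy of those averages with respect to $(\ai,\bi,\ci)$; it then applies the classical equal-mass Csisz\'ar--Kullback--Pinsker inequality to (a) (legitimate there because $\iO u = \ov{u}$ by construction) and a second-order Taylor expansion of $q(x,x_\infty)=x\ln(x/x_\infty)-(x-x_\infty)$ to (b), using the mass bounds $\ov{u},\ai\le M_1/\gamma$ etc.\ to control $q''$, before recombining via Young's inequality. You avoid the decomposition entirely: the pointwise bound $x\ln(x/y)-x+y\ge c\,(x-y)^2/(x+y)$ (whose constant you correctly justify by the removable quadratic degeneracy at $x=y$ together with the finite limits at $0$ and $\infty$) plus a weighted Cauchy--Schwarz step gives the $L^1$ control in one shot, and you correctly identify that only \emph{upper} bounds on the individual masses are needed, which the conservation laws supply. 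One point worth making explicit: the representation \eqref{ABCRelEntr} you start from is not an identity for arbitrary nonnegative functions --- the cross terms $\ln\ai\iO(u-\ai)+\ln\bi\iO(v-\bi)+\ln\ci\iO(w-\ci)$ cancel only because of the conservation laws combined with $\ai^{\alpha}\bi^{\beta}=\ci^{\gamma}$; the paper asserts this and you use it, but in a self-contained write-up you should verify it. Net comparison: the paper's decomposition mirrors the structure it reuses in the entropy entropy-dissipation estimate and leans on the classical CKP inequality as a black box, while your argument is more elementary, shorter, and sidesteps the unequal-mass issue that forces the paper's detour through the averages.
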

\begin{proof}[\bf{Proof of Proposition \ref{ABCDCK}}]\hfill\\
For a constant $x_\infty>0$, we define (the continuous extension of) the non-negative relative entropy density function 
$$
0\le q(x,x_{\infty}):=x\ln \Bigl(\frac{x}{x_{\infty}}\Bigr) - (x-x_{\infty}), \qquad x\in[0,+\infty),
$$ 
and rewrite the relative entropy \eqref{ABCRelEntr} as (see also \eqref{EEDb})
\begin{align}
E - E_{\infty}=& \iO u\ln\Bigl(\frac{u}{\aq}\Bigr)\,dx +\iO v\ln\Bigl(\frac{v}{\bq}\Bigr)\,dx+\iO w\ln\Bigl(\frac{w}{\cq} \Bigr)\,dx\nonumber\\
&+ q(\ov{u},\ai)+q(\ov{v},\bi))+q(\ov{w},\ci). 
\label{CKoneb}
\end{align}
In order to Taylor-expand the last three terms in \fref{CKoneb}, we observe first that
$$
q'(x_{\infty},x_{\infty}) = 0, \qquad \text{and}\qquad q''(x,x_{\infty}) = 
\frac{x_{\infty}}{x}.
$$
Since the non-negativity of the solutions and the conservation laws imply the natural 
a-prior bounds
$$
0\le\ov{u}, \ai \le \frac{M_1}{\gamma}, \qquad  
0\le\ov{v}, \bi \le \frac{M_2}{\gamma}, \qquad
0\le\ov{w}, \ci \le \min\left\{\frac{M_1}{\alpha}, \frac{M_2}{\beta}\right\},
$$
Taylor expansion yields the following estimate with $\theta \in (\ov{u},\ai)$
\begin{align*}
q(\ov{u},\ai) &= q(\ai,\ai) + q'(\ai,\ai)(\ov{u}-\ai) + q''(\theta,\ai)\frac{(\ov{u}-\ai)^2}{2},\\
&\ge \frac{\ai \gamma}{M_1}\frac{(\ov{u}-\ai)^2}{2}.
\end{align*}
Similar estimates hold for $q(\ov{v},\bi)$ and $q(\ov{w},\ci)$ and we obtain
\begin{multline*}
q(\ov{u},\ai) + q(\ov{v},\bi) + q(\ov{w},\ci)\\
\ge C(\alpha,\beta,\gamma,M_1,M_2) 
\left(|\ov{u}-\ai|^2+ |\ov{v}-\bi|^2 + |\ov{w}-\ci|^2\right)\label{reac}
\end{multline*}
for a constant $C(\alpha,\beta,\gamma,M_1,M_2)>0$ depending only on 
$\alpha$, $\beta$, $\gamma$, $M_1$ and $M_2$. 

Secondly, considering the integral terms on the right-hand side of \fref{CKoneb}, we estimate with
the classical Cziszar-Kullback-Pinsker inequality (Cf. \cite{cziszar}) 
\begin{equation*}
\iO u\ln\Bigl(\frac{u}{\ov{u}} \Bigr)\,dx \ge \frac{1}{2\ov{u}}\|u-\ov{u}\|_{L^1(\Omega)}^2,
\end{equation*}
and analog for $v$ and $w$, 
for which we have again $\ov{u},\ov{v},\ov{w}\le C(\alpha,\beta,\gamma,M_1,M_2)$. 

Altogether, after using Young's inequality to estimate
$\|u-\ai\|_{L^1(\Omega)}^2 \le 2\|u-\ov{u}\|_{L^1(\Omega)}^2+2|\ov{u}-\ai|^2$, we obtain 
\begin{equation*}
E - E_{\infty} \ge C
\left(\|u-\ai\|_{L^1(\Omega)}^2 + \|v-\bi\|_{L^1(\Omega)}^2 + \|w-\ci\|_{L^1(\Omega)}^2\right),
\end{equation*}
for a constant $C=C(\alpha,\beta,\gamma,M_1,M_2)$.
This ends the proof of Proposition \ref{ABCDCK}.
\end{proof}

\bigskip
We now are in a position to state the proof of Theorem \ref{tt2}. 
\begin{proof}[\bf{Proof of Theorem \ref{tt2}}]\hfill\\ 
{
The entropy dissipation law \eqref{entroDiss} yields for the relative entropy with respect to the equilibrium, i.e.  
$\psi(t) := E(u,v,w)(t)-E(\ai,\bi,\ci)$, the following inequality for a.e. $0\le t_0 \le t_1<T$
\begin{equation*}
\psi(t_1) \le \psi(t_0) -\int_{t_0}^{t_1} D(s), \qquad \text{for a.e.}\   0\le t_0 \le t_1<T,
\end{equation*}
which rewrites with Proposition \ref{ABCDDH}, i.e. $D\ge K\psi(s)$ into 
\begin{equation}\label{one}
\psi(t_0) \ge\psi(t_1) + K  \int_{t_0}^{t_1} \psi(s), \qquad \text{for a.e.}\   0\le t_0 \le t_1<T.
\end{equation}
We can now apply a Gronwall argument as stated in \cite{Bee} (see also \cite{Wil} 
for the generalisation where inequality \eqref{one} holds only almost everywhere for an integrable function 
$\psi\in L^1([0,T))$ and obtain exponential convergence of the relative entropy $\psi$, i.e. 
\begin{equation}\label{two}
\psi(t_1) \le\psi(t_0)\, e^{-K(t_1-t_0)}, \qquad \text{for a.e.}\   0\le t_0 \le t_1<T.
\end{equation}
For the convenience of the reader, we shall recall the proof of \eqref{two} in the following: 
First, we perform in \eqref{one} the change of variables $t=-r$ and $\psi(-r)=\tilde{\psi}(r)$ and 
obtain
\begin{equation}\label{oneone}
\tilde\psi(r_0) \ge\psi(t_1) + K  \int_{-t_1}^{r_0} \tilde\psi(s), \qquad \text{for a.e.}\   -T< -t_1 \le r_0\le 0.
\end{equation}
Then, we define $\Psi(r)=\int_{-t_1}^{r} \tilde\psi(r)$ and calculate with
$\dot{\Psi}(r) = \tilde\psi(r) \ge \psi(t_1) + K \Psi(r)$ the well defined derivative
\begin{align*}
\frac{d}{dr}\left(\Psi(r) e^{-K(r+t_1)}\right)& \ge \left(\psi(t_1) + K \Psi(r)\right) e^{-K(r+t_1)} - K \Psi(r) e^{-K(r+t_1)}\\
&\ge \psi(t_1)\, e^{-K(r+t_1)}.
\end{align*}
Then, integration over $[-t_1,r_0]$ and division by  $e^{-K(r_0+t_1)}$ yields 
$$
\Psi(r_0)  \ge \frac{\psi(t_1)}{K}\left(e^{K(r_0+t_1)}- 1\right),
$$
and further with \eqref{oneone} and $\int_{-t_1}^{r_0} \tilde\psi(r)  = \Psi(r_0)$
$$
\tilde\psi(r_0)  \ge \psi(t_1)\, e^{K(r_0+t_1)}.
$$
Then, return to the original variables  $t_0=-r_0$ and $\psi(-r_0)=\tilde{\psi}(r_0)$ yields \eqref{one} and thus by setting $t_0=0$
\begin{equation}
E(u,v,w)-E(\ai,\bi,\ci)\le \left[E(u_0,v_0,w_0)-E(\ai,\bi,\ci)\right]\,e^{-Kt}.
\label{ABCDGron}
\end{equation}
Finally, the estimate \eqref{res2} follows from \eqref{ABCDGron} by applying the  
Cziszar-Kullback type inequality in Proposition \ref{ABCDCK}.
}
\end{proof}

\end{document}